\date{}
\definecolor{Gray}{gray}{0.95}
\newcommand{\stress}{\sigma}
\newcommand{\fsource}{f}
\newcommand{\bodyforce}{f_u}
\newcommand{\mub}{\boldsymbol{\mu}}
\newcommand{\bias}{\mathsf{b}}
\newcommand{\scl}[2]{\langle #1, #2\rangle}
\newcommand{\sigmaMRE}{\Sigma_h\text{-}\MRE}
\newcommand{\uMRE}{U_h\text{-}\MRE}
\newcommand{\rMRE}{R_h\text{-}\MRE}
\DeclareMathOperator{\asym}{asym}
\DeclareMathOperator{\dev}{dev}
\DeclareMathOperator{\Tr}{Tr}
\DeclareMathOperator*{\argmin}{\arg\!min}
\DeclareMathOperator{\MRE}{MRE}
\DeclareMathOperator{\ACV}{ACV}
\newtheorem{theorem}{Theorem}
\newtheorem{definition}[theorem]{Definition}
\newtheorem{lemma}[theorem]{Lemma}
\newtheorem{example}[theorem]{Example}
\newtheorem{remark}[theorem]{Remark}
\numberwithin{equation}{section}
\numberwithin{theorem}{section}
\begin{document}

\begin{frontmatter}

\title{Neural network solvers for parametrized elasticity problems that conserve linear and angular momentum}

\affiliation[NORCE]{organization={NORCE Norwegian Research Centre},
            addressline={Nygårdsgaten 112}, 
            city={Bergen},
            postcode={5008}, 
            country={Norway}}
\affiliation[MOX]{organization={MOX Laboratory, Department of Mathematics, Politecnico di Milano},
            addressline={Piazza Leonardo da Vinci}, 
            city={Milan},
            postcode={20133}, 
            country={Italy}}
            
\author[NORCE]{Wietse M. Boon}
\author[MOX]{Nicola R. Franco} 
\author[MOX]{Alessio Fumagalli}

\begin{abstract}
    We consider a mixed formulation of parametrized elasticity problems in terms of stress, displacement, and rotation. The latter two variables act as Lagrange multipliers to enforce conservation of linear and angular momentum. Due to the saddle-point structure, the resulting system is computationally demanding to solve directly, and we therefore propose an efficient solution strategy based on a decomposition of the stress variable. First, a triangular system is solved to obtain a stress field that balances the body and boundary forces. Second, a trained neural network is employed to provide a correction without affecting the conservation equations. The displacement and rotation can be obtained by post-processing, if necessary. The potential of the approach is highlighted by three numerical test cases, including a non-linear model.
\end{abstract}


\begin{keyword}
parametrized PDEs \sep neural network solvers \sep momentum conservation \sep weak symmetry


\MSC[2020] 65N30 \sep 74G15 \sep 68T07
\end{keyword}

\end{frontmatter}

\section{Introduction}

The development of reliable numerical solution strategies is crucial to accurately model and predict the behavior of elastic materials in engineering applications. 
However, the computational burden of solving the elasticity equations becomes prohibitive in many-query scenarios where the PDE needs to be solved repeatedly for different system configurations, including changes in the material laws, problem parameters, or boundary conditions. 
Instead, a common strategy is to replace expensive numerical simulations with a more efficient surrogate, known as a Reduced Order Model (ROM) \cite{hesthaven_pagliantini_rozza_2022,quarteroni2016reduced}. In this work, we focus on data-driven, non-intrusive ROMs using Deep Learning techniques, which have shown promising potential \cite{kutyniok,rozzarev,franco2023deep}.

Because these approaches are entirely data-driven, they often produce solutions that violate underlying physical principles such as the conservation of mass, momentum, or energy. We herein adopt the viewpoint that, while some error in constitutive laws are tolerable, physical conservation laws need to be satisfied precisely. 

A variety of deep-learning methods have been developed to ensure conservation laws in PDE modeling. Physics-Informed Neural Networks \cite{Raissi2019686,rozzarev} enforce these constraints through the loss function. However, this approach often proves insufficient as the constraints are not guaranteed to be satisfied exactly \cite{Hansen2023}. In contrast, exact constraint-preserving networks are explored in \cite{boesen2022neural,ruthotto2020deep}, where the authors studied the incorporation of supplementary information into neural networks for modeling dynamical systems.
In \cite{trask2022enforcing}, a data-driven exterior calculus is introduced to produce structure-preserving ROMs that adhere to physical laws. Finally, \cite{beucler2021enforcing} proposed enforcing analytical constraints by mapping the solution onto the kernel of the constraint operator. In this work, we follow a similar strategy and develop such projection operators explicitly.

In \cite{Boon2022,boon2023deep}, a strategy is proposed to construct solvers that guarantee mass conservation for flow problems. This work effectively extends those ideas to the case of parameterized elasticity problems. We focus on a mixed formulation of the elasticity equation that explicitly includes the linear and angular momentum balance equations. We discretize these equations by employing the low-order mixed finite element triplet from \cite{arnold2007mixed} to model the stress, displacement, and rotation variables.

Our method involves decomposing the stress tensor into two parts: a particular solution that balances the body and boundary forces, and a homogeneous correction that conserves linear and angular momentum locally. We approximate these components separately. For the particular solution, we propose an efficient solution procedure based on a spanning tree in the mesh. For the remainder, we consider suitable neural network architectures combined with a kernel projector, which ensures that the output remains within the null space of the constraint operator. Through this procedure, we ensure that the stress field satisfies the conservation equations up to machine precision. Finally, the spanning tree can be employed again to efficiently post-process the displacement and rotation fields.

The paper is structured as follows. In \Cref{sec:model}, we introduce the elastic models compatible with our approach, as well as the mixed finite element discretization. \Cref{sec: spanning tree} is dedicated to the construction of a spanning tree solver, an ingredient of fundamental importance for our construction. Next, the relevant background and preliminaries on deep learning algorithms are presented in \Cref{sec:dl preliminaries}. \Cref{sec:conservative solvers} combines the spanning tree and deep learning techniques to form our reduced order models. Numerical experiments, designed to compare performance across problems of increasing complexity, are discussed in \Cref{sec:experiments}. Finally, concluding remarks are given in \Cref{sec:conclusion} and we provide additional details concerning Proper Orthogonal Decomposition and the used neural networks in \Cref{appendix: POD}.

\section{Model problems}
\label{sec:model}

Let $\Omega \in \mathbb{R}^d$ with $d = 2, 3$ be a Lipschitz, polytopal domain and let its boundary be disjointly partitioned as $\partial \Omega = \partial_u \Omega \cup \partial_\sigma \Omega$ with $|\partial_u \Omega| > 0$.
We consider a mixed formulation of elasticity problems in which the stress field is a primary variable. This allows us to explicitly impose the balance of linear and angular momentum as equations in the system. We thus consider the problem unknowns
\begin{align}
    \sigma&: \Omega \to \mathbb{R}^{d \times d}, & 
    u&: \Omega \to \mathbb{R}^d, & 
    r&: \Omega \to \mathbb{R}^{\binom{d}{2}}.
\end{align}
in which $\sigma$ denotes the Cauchy stress tensor, $u$ is the displacement, and $r$ is a variable commonly referred to as the \emph{rotation}. Note that $r$ is a scalar variable in 2D and a vector field in 3D. The rotation acts as a Lagrange multiplier to enforce symmetry of $\sigma$ through the operator $\asym: \mathbb{R}^{d \times d} \to \mathbb{R}^{\binom{d}{2}}$. This operator, and its adjoint $\asym^*$, are given by:
\begin{align*}
    \asym \sigma               & = \begin{bmatrix}
        \sigma_{32} - \sigma_{23} \\
        \sigma_{13} - \sigma_{31} \\
        \sigma_{21} - \sigma_{12}
    \end{bmatrix}, &
    \asym^* r & =
    \begin{bmatrix}
        0    & -r_3 & r_2  \\
        r_3  & 0    & -r_1 \\
        -r_2 & r_1  & 0
    \end{bmatrix}, &
    d = 3,                                                     \\\\
    \asym \sigma               & = \sigma_{21} - \sigma_{12}, &
    \asym^* r                  & =
    \begin{bmatrix}
        0 & -r \\
        r & 0
    \end{bmatrix}, &
    d = 2.
\end{align*}

We are interested in elasticity problems of the form: find $(\sigma, u, r)$ such that
\begin{subequations}
    \label{eq:model-problem}
    \begin{align}
        A\sigma - \nabla u - \asym^*r & = 0,  \label{eq: stress-strain} \\
        -\nabla \cdot \sigma          & = \bodyforce,  \label{eq: lin momentum} \\
        \asym \sigma                  & = 0,  \label{eq: ang momentum} 
    \end{align}
    on $\Omega$, subject to the boundary conditions
    \begin{align}    
        u &= g_u\;\;\text{on}\;\partial_u \Omega, &
        \nu \cdot \sigma &= 0\;\;\text{on}\;\partial_\sigma \Omega.
    \end{align}
\end{subequations}
Here, $\bodyforce$ is a given body force, $g_u$ prescribes the displacement on the boundary $\partial_u \Omega$, and $\nu$ is the outward unit normal of $\partial \Omega$. For clarity, we moreover note that $\nabla u$ denotes the gradient of the displacement and $\nabla \cdot \sigma$ refers to the row-wise divergence on the Cauchy stress.

We briefly elaborate on the physical meaning of the equations of \eqref{eq:model-problem}. First, the operator $A: \mathbb{R}^{d \times d} \to \mathbb{R}^{d \times d}$ in \eqref{eq: stress-strain} is the (possibly non-linear), stress-strain relationship. Depending on the material properties, different laws can be considered, resulting in different choices, and parametrizations, of $A$. We report two examples below.

\begin{example}[Hooke's law]\label{ex: hook}
    In the case of linearized elasticity and an isotropic, homogeneous material, the stress-strain relation is given by
    \begin{align} \label{eq: Hooke}
        A\sigma \coloneqq \frac1{2\mu}\left( \sigma - \frac{\lambda}{2\mu + d\lambda} \Tr(\sigma)I \right),
    \end{align}
    in which $\mu$ and $\lambda$ are the Lamé parameters, $\Tr$ denotes the matrix trace and $I$ is the identity tensor. The inverse of $A$ yields the recognizable expression:
    \begin{align*}
        A^{-1} \varepsilon = 2 \mu \varepsilon + \lambda \Tr (\varepsilon) I.
    \end{align*}
\end{example}

\begin{example}[Hencky-von Mises] \label{ex: hencky-von mises}
    As investigated in \cite{gatica2013priori,da2015virtual}, the Hencky-von Mises model defines the stress-strain relationship $A$ as in \eqref{eq: Hooke}, but the Lamé parameters are modeled as functions of the deviatoric strain, i.e. 
    \begin{align*}
        \mu &= \mu(\| \dev(\varepsilon) \|), & 
        \lambda &= \lambda(\| \dev(\varepsilon) \|).
    \end{align*}
    Here, $\varepsilon=\frac{1}{2}(\nabla u+(\nabla u)^\top)$ is the linearized strain tensor, $\| \cdot \|$ denotes the Frobenius norm, and $\dev$ denotes the deviator
    $\dev(\tau) \coloneqq \tau - \frac1d \Tr(\tau)$.
    We emphasize that the operator $A$ is non-linear in this case.
\end{example}

The conservation laws are given by \eqref{eq: lin momentum} and \eqref{eq: ang momentum}. The first conservation law describes the balance of linear momentum. By choosing appropriate finite element spaces (cf. \Cref{sub: FOM}), we will impose this law strongly and pointwise in $\Omega$.

The second conservation law is the balance of angular momentum which, under the assumption of linear momentum conservation, is equivalent to imposing the symmetry of the Cauchy stress tensor, i.e. \eqref{eq: ang momentum}. Due to the difficulties in constructing stable mixed finite elements that exactly satisfy this constraint, we herein follow \cite{arnold2007mixed} and impose the symmetry constraint weakly.

We emphasize that we only consider models that feature linear conservation laws \eqref{eq: lin momentum} and \eqref{eq: ang momentum}, while allowing for non-linearities in the constitutive law \eqref{eq: stress-strain}.

\begin{remark}[Hyperelasticity]
    In the case of hyperelasticity, the infinitesimal strain $\varepsilon$ is replaced by the Lagrangian Green strain $E \coloneqq \varepsilon + \frac12(\nabla u)^\top (\nabla u)$. Additionally, the second Piola-Kirchhoff stress tensor is given by
    \begin{align*}
        S = 2\mu E + \lambda Tr(E) I,
    \end{align*}
    The momentum balance and symmetry constraints are now given by
    \begin{align*}
        - \nabla \cdot (F S) & = \bodyforce, &
        \asym S              & = 0,
    \end{align*}
    in which $F$ is the deformation gradient. Note that the equation describing linear momentum has become non-linear. Changing variables to the first Piola-Kirchhoff stress tensor $P \coloneqq F S$ does not improve matters because this would introduce a non-linearity in the symmetry constraint instead. The approach from \cite{boon2023deep}, which we follow herein, is only valid for linearly constrained systems and thus hyperelasticity falls outside the scope of this work.
\end{remark}

As mentioned in the introduction, we aim to construct efficient numerical solvers that, given a problem of interest, can rapidly produce multiple simulations for varying parameters (e.g. different values of the Lamé parameters $\mu,\lambda$, perturbations in the body force $\bodyforce$, or changes in the boundary conditions $g_u$ for the displacement). Specifically, we are interested in surrogate models based on neural networks that produce accurate simulations after training on a set of pre-computed solutions $\{(\sigma_i,u_i,r_i)\}_i$, obtained for various configurations of the system. 

We will only employ neural networks in the solving stages of the problem. This means that we will specify the finite element basis beforehand and propose a solver to produce the coefficients. We therefore dedicate the next subsection to a brief explanation of the chosen finite element spaces.

\subsection{A mixed finite element discretization}
\label{sub: FOM}

Let the mesh $\Omega_h$ be a shape-regular, simplicial tesselation of $\Omega$. For the discretization of \eqref{eq:model-problem}, we will consider the finite element triplet of lowest order proposed in \cite{arnold2006finite,arnold2007mixed}. These spaces are given by (tuples of) the Brezzi-Douglas-Marini element $\mathbb{BDM}_1$ and the piecewise constants $\mathbb{P}_0$ on $\Omega_h$:
\begin{subequations}
\begin{align} \label{eq: discrete spaces}
    \Sigma_h & \coloneqq \{ \sigma_h \in \mathbb{BDM}_1^d(\Omega_h) : \nu \cdot \sigma = 0 \text{ on } \partial_\sigma \Omega \},     \\
    U_h      & \coloneqq \mathbb{P}_0^d(\Omega_h),            &
    R_h      & \coloneqq \mathbb{P}_0^{\binom{d}{2}}(\Omega_h)
\end{align}
\end{subequations}
The finite element problem then becomes: find $(\sigma_h, u_h, r_h) \in \Sigma_h \times U_h \times R_h$ such that
\begin{subequations}
    \label{eq:finite-element}
    \begin{align}
        \scl{A\sigma_h}{\tilde \sigma_h}_\Omega + \scl{u_h}{\nabla \cdot \tilde \sigma_h}_\Omega - \scl{r_h}{\asym \tilde \sigma_h}_\Omega                                                 & = \scl{g_u}{\nu \cdot \tilde \sigma_h}_{\partial_u \Omega} \\
        -\scl{\nabla \cdot \sigma_h}{\tilde u_h}_\Omega & = \scl{\bodyforce}{\tilde u_h}_\Omega                                 \\
        \scl{\asym \sigma_h}{\tilde r_h}_\Omega         & = 0,
    \end{align}
\end{subequations}
for all test functions $(\tilde\sigma_h,\tilde u_h,\tilde r_h)\in\Sigma_h\times U_h\times R_h$. Here $\scl{\phi}{\psi}_\Omega \coloneqq \int_{\Omega}\phi(x)\psi(x)dx$ and $\scl{\phi}{\psi}_{\partial_u\Omega} \coloneqq \int_{\partial_u\Omega}\phi(s)\psi(s)ds$ denote the relevant $L^2$ inner products. 
It was shown in \cite{arnold2007mixed} that, assuming lower and upper bounds on $A$, Problem \eqref{eq:finite-element} admits a unique solution that is bounded in the following norms:
\begin{subequations}\label{eqs:norms}
\begin{align} 
    \| \sigma_h \|_{\Sigma_h}^2 &\coloneqq 
    \| \sigma_h \|_{L^2(\Omega)}^2 +
    \| \nabla \cdot \sigma_h \|_{L^2(\Omega)}^2, \\
    \| (u_h, r_h) \|_{X_h}^2 &\coloneqq 
    \| u_h \|_{U_h}^2\;\;\;\;+
    \| r_h \|_{R_h}^2
    \nonumber
    \\
    &\coloneqq 
    \| u_h \|_{L^2(\Omega)}^2 +
    \| r_h \|_{L^2(\Omega)}^2.
\end{align}
\end{subequations}

When $A$ is linear, e.g.~as in \Cref{ex: hook}, problem \eqref{eq:finite-element} is a linear system in the unknowns $(\sigma_h,u_h,r_h)$ that can be addressed with a direct solver. However, if $A$ is non-linear, then iterative solvers are required, and we propose a specific scheme in the following example.

\begin{example}[Iterative solver for the Hencky-von Mises model] \label{ex: hencky-von mises solver}
\Cref{ex: hencky-von mises} assumes that the Lamé parameters depend on the deviatoric strain. 
In our discrete formulation, we do not immediately have access to the strain (nor its deviator) since the displacement $u_h$ is only given as a piecewise constant. However, we can retrieve the deviatoric strain by applying the deviator on the stress-strain relationship:
\begin{align*}
    \dev(\sigma) = 2\mu \dev(\varepsilon) + \lambda \Tr(\varepsilon) \dev(I)
    = 2\mu \dev(\varepsilon),
\end{align*}
and thus, $\| \dev(\sigma) \|
    = 2 \mu \| \dev(\varepsilon) \|.$
Using this observation, we will solve \eqref{eq:finite-element} using the following iterative scheme:
\begin{enumerate}
    \item Set $i = 0$ and compute an initial guess $(\sigma_0,u_0,r_0)\in\Sigma_h\times U_h\times R_h$, e.g. by solving \eqref{eq:finite-element} with constant Lamé parameters.
    \item Evaluate $\sigma_i$ at the cell centers and compute the norm of its deviator.
    \item In each cell, solve the following non-linear equation for $0 \le \zeta \in \mathbb{R}$:
    \begin{equation*}
        2 \mu(\zeta) \zeta
        = \| \dev(\sigma_i) \|.
    \end{equation*}
    \item Evaluate $\mu=\mu(\zeta)$, $\lambda=\lambda(\zeta)$, then solve \eqref{eq:finite-element} to obtain $(\sigma_{i+1},u_{i+1},r_{i+1})$. Increment $i \leftarrow i + 1$.
    \item Iterate steps 2-4 until convergence.
\end{enumerate}
\end{example}

\section{A spanning tree solver for the stress}
\label{sec: spanning tree}

We now leverage the mathematical structure of the Full-Order Model (FOM) discretization \eqref{eq:finite-element} to derive an efficient numerical solver that, despite operating at FOM level, handles the conservation of linear and angular momentum in linear time. This will be a fundamental ingredient for our construction of reduced order models in \Cref{sec:conservative solvers}.

To start, we introduce the following short-hand notation for the \textit{constraint operator} $B: \Sigma_h \to U_h' \times R_h'$ and the functional $f \in U_h' \times R_h'$:
\begin{align*}
    \scl{B \sigma_h}{(\tilde u_h, \tilde r_h)} &\coloneqq 
    -\scl{\nabla \cdot \sigma_h}{\tilde u_h}_\Omega
    + \scl{\asym \sigma_h}{\tilde r_h}_\Omega, \\
    \scl{f}{(\tilde u_h, \tilde r_h)} 
    &\coloneqq \scl{f_u}{\tilde u_h}_\Omega
\end{align*}
in which $U_h'$ and $R_h'$ are the dual spaces of $U_h$ and $R_h$, respectively, and the angled brackets on the left-hand side indicate duality pairings. The conservation laws \eqref{eq: lin momentum}-\eqref{eq: ang momentum} can then be expressed as
\begin{equation}
    \label{eq: constraint}
    B\sigma = f,
\end{equation}
For parametric problems, we adopt a subscript $f_{\mub}$ to indicate the dependency on a set of parameters collected in the vector $\mub$. We emphasize that the operator $B$ is linear and independent of $\mub$. 

As discussed in \cite{boon2023deep}, we can handle linear constraints such as \eqref{eq: constraint} by constructing a right-inverse of the constraint operator. In particular, we construct an operator $S_I$ that, for any given $f \in U_h' \times R_h'$ produces a $S_I f \in \Sigma_h$ such that
\begin{align*}
    B S_I f = f.
\end{align*}

To construct $S_I f$ for a given $f \in U_h' \times R_h'$, we first consider the surjectivity of the operator $B$. For that, we recall the following result. 
\begin{lemma} \label{lem: infsup standard}
The discrete spaces given by \eqref{eq: discrete spaces} satisfy the following condition:
\begin{align*}
    \inf_{(u_h, r_h) \in U_h \times R_h}
    \;\sup_{\sigma_h \in \Sigma_h}\;
    \frac{\langle B\sigma_h, (u_h, r_h) \rangle}{\| \sigma_h \|_{\Sigma_h} \| (u_h, r_h) \|_{X_h}} \ge C > 0
\end{align*}
with $C$ independent of the mesh size $h$.
\end{lemma}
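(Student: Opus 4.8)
\emph{Proof proposal.}
The stated estimate is the Brezzi/inf--sup condition for the operator $B$, and it is equivalent to $B:\Sigma_h\to U_h'\times R_h'$ being surjective with an $h$-uniformly bounded right inverse. Concretely, it suffices to exhibit, for every $(u_h,r_h)\in U_h\times R_h$, some $\sigma_h\in\Sigma_h$ with
\begin{equation*}
    -\nabla\cdot\sigma_h = u_h \qquad\text{and}\qquad \scl{\asym\sigma_h}{\tilde r_h}_\Omega = \scl{r_h}{\tilde r_h}_\Omega \ \ \text{for all } \tilde r_h\in R_h,
\end{equation*}
together with $\|\sigma_h\|_{\Sigma_h}\le C\big(\|u_h\|_{U_h}+\|r_h\|_{R_h}\big)$ and $C$ independent of $h$. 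Indeed, for such a $\sigma_h$ one has $\scl{B\sigma_h}{(u_h,r_h)}=\|u_h\|_{U_h}^2+\|r_h\|_{R_h}^2=\|(u_h,r_h)\|_{X_h}^2$, so the supremum over $\Sigma_h$ in the statement is at least $\|(u_h,r_h)\|_{X_h}^2/\|\sigma_h\|_{\Sigma_h}\ge C^{-1}\|(u_h,r_h)\|_{X_h}$, which gives the claim with constant $C^{-1}$.

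I would construct $\sigma_h$ in two steps. First, invoke the classical stability of the mixed pair $(\mathbb{BDM}_1,\mathbb{P}_0)$ for the Darcy/Poisson problem: since $|\partial_u\Omega|>0$, the row-wise divergence $\nabla\cdot:\Sigma_h\to U_h$ is surjective with a bounded right inverse, hence there is $\tau_h\in\Sigma_h$ with $-\nabla\cdot\tau_h=u_h$ and $\|\tau_h\|_{\Sigma_h}\le c\|u_h\|_{U_h}$. Boundedness of $\asym$ and of the $L^2$-projection $P_{R_h}$ onto $R_h$ then yield $\|P_{R_h}\asym\tau_h\|_{R_h}\le c'\|u_h\|_{U_h}$. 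Second, put $s_h:=r_h-P_{R_h}\asym\tau_h\in R_h$ and seek a \emph{correction} $\rho_h\in\Sigma_h$ that is row-wise divergence-free (so $\|\rho_h\|_{\Sigma_h}=\|\rho_h\|_{L^2}$), satisfies $\scl{\asym\rho_h}{\tilde r_h}_\Omega=\scl{s_h}{\tilde r_h}_\Omega$ for all $\tilde r_h\in R_h$, and obeys $\|\rho_h\|_{L^2}\le c\|s_h\|_{R_h}$. Then $\sigma_h:=\tau_h+\rho_h$ has all the required properties.

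The construction of $\rho_h$ is the crux, and it is exactly where the weak-symmetry structure of the Arnold--Falk--Winther element is used. One route is to exploit the associated discrete elasticity complex: divergence-free members of $\Sigma_h$ arise as a discrete matrix-valued $\operatorname{curl}$ of a potential in the preceding space of the complex (with the homogeneous normal trace on $\partial_\sigma\Omega$ encoded through the boundary behaviour of the potential), and composing this $\operatorname{curl}$ with $\asym$ collapses --- via the identity $\asym(\operatorname{curl} w)=\nabla\cdot w$ in two dimensions, and its counterpart in three dimensions --- to a divergence that can be prescribed over the piecewise-constant target $R_h$ with $h$-uniform stability. This is the content worked out in \cite{arnold2006finite,arnold2007mixed} for this family; alternatively, one may run a Fortin argument, transferring a continuous right inverse of $(-\nabla\cdot,\asym)$ through a (modified) $\mathbb{BDM}_1$ canonical interpolant whose commutativity with the divergence and $L^2$-stability are standard. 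The main obstacle is obtaining the three properties \emph{simultaneously} --- divergence-free, prescribed weak asymmetry, and an $h$-independent bound --- rather than any one of them in isolation; it is precisely the exactness of the discrete complex (or, equivalently, the existence of the bounded commuting projection) that makes these compatible, and all constants are then controlled by shape-regularity alone.
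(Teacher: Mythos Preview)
Your proposal is correct and, in substance, reproduces the Arnold--Falk--Winther argument that the paper itself simply cites: the paper's proof consists of a single reference to \cite[Thm.~7.1]{arnold2007mixed} without further detail. Your two-step construction (first a $\tau_h$ handling the divergence via $\mathbb{BDM}_1/\mathbb{P}_0$ stability, then a divergence-free correction $\rho_h$ fixing the weak asymmetry through the discrete elasticity complex or a Fortin operator) is exactly the mechanism underlying that theorem, so you have effectively unpacked the citation rather than taken a different route.
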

\begin{proof}
    See \cite[Thm.~7.1]{arnold2007mixed}.
\end{proof}

The inf-sup condition of \Cref{lem: infsup standard} implies that if $\langle B\sigma_h, (u_h, r_h) \rangle = 0$ for all $\sigma_h \in \Sigma_h$, then $(u_h, r_h) = (0, 0)$. In other words, $B$ is surjective.

We now briefly consider the edge case where an element $\omega$ borders multiple boundaries on which essential boundary conditions are imposed, such that only one of its facets,  $\gamma$, contains degrees of freedom in the space $\Sigma_h$. Since the proof of \Cref{lem: infsup standard} covers this case, this means that the operator $B$ restricted to this single facet $\gamma$ remains surjective on the space restricted to $\omega$. 
In turn, we may construct $S_I f$ by moving through the grid one facet and cell at a time. We do this by constructing a \emph{spanning tree} as follows. 

We start by considering the dual grid of $\Omega_h$ as a graph. In particular, we construct a graph $\mathcal{G}$ by assigning a node to each cell $\omega \in \Omega_h$ and we connect two nodes with an edge if the corresponding cells share a facet. Next, we choose a cell $\omega_0$ that borders the boundary $\partial_u \Omega$ and an adjacent facet $\gamma_0$ on that boundary. We then construct a spanning tree $\mathcal{T}$ of the graph $\mathcal{G}$ rooted at the node corresponding to $\omega_0$. An example is illustrated in \Cref{fig:spt}(left). For readers unfamiliar with the concept, we recall that a spanning tree is a connected subgraph that: i) contains all the vertices of the original graph, ii) has no cycles. These structures are often employed to re-organize data on graphs. For an example in the context of finite elements, see \cite{de2023construction}.

\begin{remark}[Multiple roots]
    Alternatively, we may choose multiple roots near the boundary to construct multiple trees that, together, form a spanning forest that reaches every cell in the grid exactly once. An example is illustrated in \Cref{fig:spt}(right). 
\end{remark}
\begin{figure}[ht]
    \centering
    \includegraphics[width=0.33\linewidth]{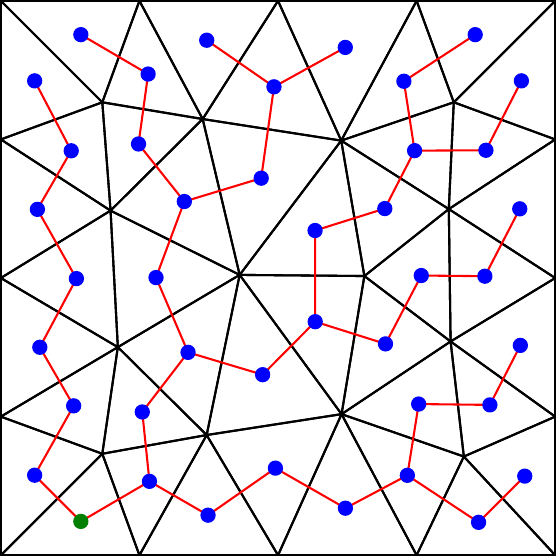}%
    \hspace{0.2\textwidth}%
    \includegraphics[width=0.33\linewidth]{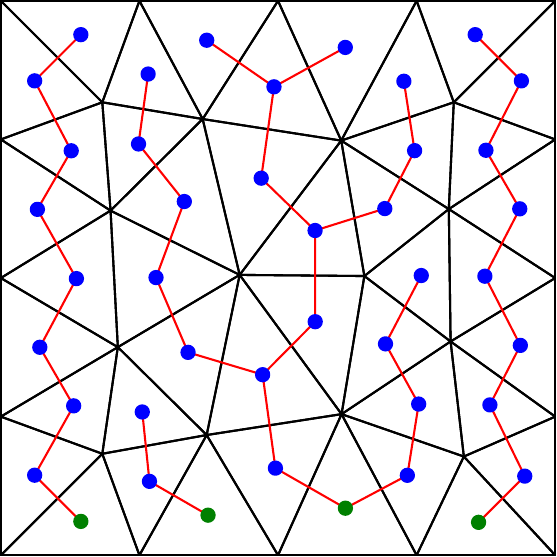}
    \caption{Example of spanning trees for a two-dimensional grid, on the left with single root (green dot) and on the right with multiple roots. Each root is associated to a facet on the boundary on which a displacement boundary condition is imposed.}
    \label{fig:spt}
\end{figure}

The edges of the tree $\mathcal{T}$ correspond to a subset of facets in the grid $\Omega_h$ and, together with the root facet $\gamma_0$, we refer to this set as $\mathcal{F}_\mathcal{T}$. Note that this set contains as many facets as there are cells in the grid. This is due to the fact that the number of nodes in a tree is one higher than its number of edges, and we have included the boundary facet in $\mathcal{F}_\mathcal{T}$. 

We now combine the surjectivity of $B$ with the spanning tree $\mathcal{T}$. In particular, we note that the degrees of freedom of $\mathbb{BDM}_1^d$ on $\mathcal{F}_\mathcal{T}$ are sufficient to construct a right-inverse of $B$. However, on each facet, we have 9 degrees of freedom (4 in 2D) and 6 equations to fulfill per cell (3 in 2D). Indeed, the space $\mathbb{BDM}_1^d$ contains more degrees of freedom than necessary for the surjectivity to hold.
This was also observed in \cite[Sec. 8]{arnold2007mixed} and, consequently, a reduced space was proposed, which we denote by $\mathbb{AFW}^-$. This space has 6 degrees of freedom per facet in 3D, respectively 3 in 2D, and therefore matches exactly the number of equations we need to satisfy. The explicit degrees of freedom can be found in \cite[Sec. 8]{arnold2007mixed} and \cite[Sec. 11.8]{arnold2006finite}.

As shown in \cite{arnold2006finite,arnold2007mixed}, the analogue of \Cref{lem: infsup standard} holds in the reduced space, namely:
\begin{align*}
    \inf_{(u_h, r_h) \in U_h \times R_h}
    \;\sup_{\sigma_h \in \mathbb{AFW}^-}\;
    \frac{\langle B\sigma_h, (u_h, r_h) \rangle}{\| \sigma_h \|_{\Sigma_h} \| (u_h, r_h) \|_{X_h}} \ge c > 0
\end{align*}

Let now $\Pi: \mathbb{BDM}_1^d(\Omega_h) \to \mathbb{AFW}^-(\mathcal{F}_\mathcal{T})$ be the restriction onto those degrees of freedom of the reduced space that are on the facets of $\mathcal{F}_\mathcal{T}$. Its adjoint $\Pi^*$ is then an inclusion operator into $\mathbb{BDM}_1^d$.

Following the same arguments as above, the operator $B \Pi^*$ is surjective as well. Let $\mathsf{B \Pi^T} \in \mathbb{R}^{n \times n}$ be its matrix representation with $n = \dim(U_h) + \dim(R_h) = \dim(\mathbb{AFW}^-(\mathcal{F}_\mathcal{T}) = 6\cdot n_{cells}$, respectively $3 \cdot n_{cells}$ in 2D. Since it is a square, surjective matrix, it is invertible, which allows us to define:
\begin{align*}
    S_I \coloneqq \Pi^* (B \Pi^*)^{-1}
\end{align*}
It is easy to see that $S_I$ is a right-inverse of $B$. This operator will be the key building block in our construction because it provides a weakly-symmetric stress field that balances the forces. In turn, we only need to provide a correction that lies in the kernel of $B$, for which we will use trained neural networks.

\begin{remark}
    The application of $S_I$ involves solving a sparse linear system. Due to the underlying spanning tree structure, this linear system is equivalent to a triangular matrix. In particular, we may solve the equations relevant to the leaf nodes first, which decides the degrees of freedom on the adjacent face. We can then progress iteratively through the tree towards the root. Since there are only 6 equations per cell, this system can be solved in linear time with respect to the number of cells.
\end{remark}

We conclude this section by noting that $S_I$ can be used to reconstruct the displacement and rotation variables. In particular, for given stress $\sigma_h$, we can use the adjoint $S_I^*: \Sigma_h' \to U_h \times R_h$ and \eqref{eq: stress-strain} to derive
\begin{align} \label{eq: postprocessing}
    (u_h, r_h)
     & = (B S_I)^* (u_h, r_h)
    = S_I^* (B^*(u_h, r_h))
    = S_I^* (A_h \sigma_h - g_h)
\end{align}
in which $A_h: \Sigma_h \to \Sigma_h'$ and $g_h \in \Sigma_h'$ are defined such that:
\begin{align*}
    \scl{A_h \sigma_h}{\tilde \sigma_h} &\coloneqq 
    \scl{A \sigma_h}{\tilde \sigma_h}_\Omega, &
    \scl{g_h}{\tilde \sigma_h} 
    &\coloneqq \scl{g_u}{\nu \cdot \tilde \sigma_h}_{\partial_u \Omega}
\end{align*}
Finally, recall that $S_I$ involves solving a sparse and triangular system. The adjoint operator solves the transpose problem, which is also sparse and triangular, so the post-processing of the displacement and rotation can be done efficiently for given $\sigma_h$, $A_h$, and $g_u$.

\section{Preliminaries on neural network based non-linear regression} 
\label{sec:dl preliminaries}

In this section, we recall some fundamental concepts of deep learning, starting from the definition and training of classical neural network architectures. A brief overview of the POD-NN approach is also presented. Readers familiar with these concepts may skip this section and proceed to \Cref{sec:conservative solvers}.

\subsection{Black-box models: definition and training}
\label{sub: regression}
Feed forward neural networks are computational units commonly employed for non-linear regression. At their core, neural networks are based on the composition of affine transformations and non-linear activations. We provide a rigorous definition for clarity.

\begin{definition}[Layer]
\label{def:layer}
Let $\rho:\mathbb{R}\to\mathbb{R}$ and let $m,n\in\mathbb{N}_+.$ A map $L:\mathbb{R}^{m}\to\mathbb{R}^{n}$ is said to be a layer with activation function $\rho$ if it can be expressed as
\begin{equation*}
    L(\mathsf{y})=\rho(\mathsf{W}\mathsf{y}+b),
\end{equation*}
for a suitable matrix $\mathsf{W}\in\mathbb{R}^{n\times m}$, called weight-matrix, and a suitable vector $b\in\mathbb{R}^{n}$, called bias vector. Here, the action of the activation function $\rho$ is intended component-wise, i.e. $\rho([w_1,\dots,w_m]^\top) \coloneqq [\rho(w_1),\dots,\rho(w_m)]^\top.$ If $\rho$ is the identity, the layer is said to be affine.
\end{definition}

\begin{definition}[Feed forward neural network] Let $\rho:\mathbb{R}\to\mathbb{R}$. A neural network with activation function $\rho$ is any map $\Phi:\mathbb{R}^{m}\to\mathbb{R}^{n}$ that can be expressed as
\begin{equation}
\label{eq: deep neural network}
\Phi(v)=(L_{k}\circ\dots\circ L_0)(v),\end{equation}
where $L_k:\mathbb{R}^{n_k}\to\mathbb{R}^{n}$ is an affine layer, whereas $L_{i}:\mathbb{R}^{n_{i}}\to\mathbb{R}^{n_{i+1}}$ are all layers with activation function $\rho$. Here, $n_0 \coloneqq m$.    
\end{definition}

To illustrate the use of neural network models for non-linear regression, let us assume we are given a map
\begin{equation*}
\mathscr{F}:M\ni \mub\mapsto \mathsf{y}_{\mub}\in\mathbb{R}^{N_h},
\end{equation*}
where $M\subset\mathbb{R}^{p}$ and $\mathbb{R}^{N_h}$ denote the input and output spaces, respectively. In our case, $\mub$ will be the vector of parameters parametrizing the elasticity problem whereas $\mathsf{y}_{\mub}$ is the vector containing the finite element coefficients of the stress field. Consequently, $\mathscr{F}$ corresponds to the full-order model.

We are interested in learning $\mathscr{F}$, i.e., in finding a suitable neural network model that can approximate the map $\mub\mapsto\mathsf{y}_{\mub}.$ We assume that the reference model $\mathscr{F}$ is at our disposal during the training phase.
The classical procedure consists of three steps. First, the \textit{sampling phase} consists of querying $\mathscr{F}$ to construct a training set 
\begin{equation*}
    \{\mub_i,\mathsf{y}_{\mub_i}\}_{i=1}^{N}\subset M\times\mathbb{R}^{N_h}.
\end{equation*}
The second step concerns the design of a suitable neural network architecture
\begin{equation*}
    \Phi:\mathbb{R}^{p}\to\mathbb{R}^{N_h}.
\end{equation*}
This corresponds to choosing the number of layers in \eqref{eq: deep neural network}, together with their input-output dimensions, without  specifying their weight matrices and bias vectors: these are collected in a common list $\theta \coloneqq (\mathsf{W}_0,\bias_0,\dots,\mathsf{W}_k,\bias_k)$ and regarded as tunable parameters.
The final step is the so-called \textit{training phase}, which ultimately consists in solving the following minimization problem
\begin{equation}
\label{eq:training}
\theta_* \coloneqq \argmin_{\theta}\;\frac{1}{N}\sum_{i=1}^{N}\|\mathsf{y}_{\mub_i}-\Phi_\theta(\mub_i)\|_{Y}^2.\end{equation}
Here, $\|\cdot\|_{Y}$ is a given norm defined over the output space $\mathbb{R}^{N_h}$.
The right-hand-side of \eqref{eq:training} is commonly known as \textit{loss function}, here defined using the mean squared error metric. Once the training phase is completed, the non-linear regression model is obtained as $\Phi \coloneqq \Phi_{\theta_*}.$

We mention that, in general, the training phase can be computationally challenging due to i) non-linearity and non-convexity of the loss function and ii) the large number of trainable parameters. To overcome this difficulty, several strategies have been developed. We highlight the so-called \textit{POD-NN approach}, which has the advantage of simplifying the training phase when $N_h$ is large. We provide a quick overview in the next subsection.

\subsection{The POD-NN strategy}
\label{sub: pod-nn}
POD-NN is a technique combining Proper Orthogonal Decomposition (POD), a commonly used dimensionality reduction technique, and neural networks (NN). The approach was originally developed in the context of model order reduction \cite{hesthaven2018non}. More precisely, it was introduced as a data-driven alternative of the so-called reduced basis method \cite{quarteroni2016reduced}. In subsequent years it was re-discovered, and generalized, under different names, such as PCA-net and POD-DeepONet \cite{lanthaler2023operator, lu2022comprehensive}.
From a purely practical point of view, however, the POD-NN approach can regarded as an alternative training strategy applied to classical neural network regression. 

To illustrate the idea, let $\Phi_{\theta}:\mathbb{R}^{p}\to\mathbb{R}^{N_h}$ be a trainable neural network architecture, where $\theta \coloneqq (\mathsf{W}_0,\bias_0,\dots,\mathsf{W}_k,\bias_k)$ as before. Let $n\in\mathbb{N}_+$ be such that $\mathsf{W}_k\in\mathbb{R}^{N_h\times n}$. We can highlight the presence of the last layer by re-writing the action of the neural network model as
\begin{equation*}
\Phi_\theta(\mub)=\mathsf{W}_k\phi_{\tilde\theta}(\mub)+\bias_k,
\end{equation*}
where $\phi_{\tilde\theta}:\mathbb{R}^{p}\to\mathbb{R}^{n}$ is the submodule collecting all layers except for the last one, so that $\tilde{\theta} \coloneqq (\mathsf{W}_0,\bias_0,\dots,\mathsf{W}_{k-1},\bias_{k-1})$. Assume that $\bias_k\equiv0.$ Then, it is straightforward to see that, for every $\mub\in M$, the vector $\Phi_\theta(\mub)\in\mathbb{R}^{N_h}$ lies in the subspace of $\mathbb{R}^{N_h}$ spanned by the columns of $\mathsf{W}_k$. In particular, if $\mathsf{V}\in\mathbb{R}^{N_h\times n}$ is an orthonormal matrix spanning the same subspace of $\mathsf{W}_k$, by optimality of orthogonal projections we have
\begin{equation*}
    \|\mathsf{y}_{\mub}-\Phi_\theta(\mub)\|=\|\mathsf{y}_{\mub}-\mathsf{W}_k\phi_{\tilde\theta}(\mub)\|\le\|\mathsf{y}_{\mub}-\mathsf{V}\mathsf{V}^\top\mathsf{y}_{\mub}\|
\end{equation*}
for all $\mub\in M$. With this observation, the POD-NN approach proposes the following design and training strategy:
\begin{enumerate}
    \item For a given tolerance $\epsilon>0$, find a suitable $n\in\mathbb{N}_+$ and a corresponding matrix $\mathsf{V}\in\mathbb{R}^{N_h\times n}$ such that
    \begin{equation}        \label{eq: pod loss}
        \frac{1}{N}\sum_{i=1}^{N}\|\mathsf{y}_{\mub_i}-\mathsf{V}\mathsf{V}^\top\mathsf{y}_{\mub_i}\|^2<\epsilon^2.
    \end{equation}
    In practice, this can be achieved by leveraging the POD algorithm, which is ultimately based on a truncated singular value decomposition. We refer the interested reader to \Cref{appendix: POD}.
    \item Introduce a trainable network $\phi_{\tilde\theta}:\mathbb{R}^{p}\to\mathbb{R}^{n}$ and train it according to the loss function
    \begin{equation}
        \label{eq: pod-nn loss}
       \mathscr{L}(\tilde\theta) \coloneqq \frac{1}{N}\sum_{i=1}^{N}\|c_{\mub_i}-\phi_{\tilde\theta}(\mub_i)\|^2,
    \end{equation}
    where $c_{\mub} \coloneqq \mathsf{V}^\top\mathsf{y}_{\mub_i}$ are the ideal coefficients over the POD basis.
    \item Define $\Phi=\Phi_{\textnormal{POD}}$ as $\Phi_\textnormal{POD}(\mub) \coloneqq \mathsf{V}\phi(\mub)$, where $\phi=\phi_{\tilde\theta_*}$ is the trained module. 
\end{enumerate}
For the sake of simplicity, we presented the POD-NN strategy using the $\ell^2$-norm $\|\cdot\|$ (Euclidean norm). However, the procedure can be adjusted to account for arbitrary norms $\|\cdot\|_{Y}$ satisfying the parallelogram rule, cf. Remark \ref{remark: norms}.

\begin{remark}
\label{remark: norms}
One can easily adapt the construction in \Cref{sub: pod-nn} to ensure optimality in any norm induced by an inner-product. Given such a norm $\|\cdot\|_{Y}$, let $\mathsf{D}\in\mathbb{R}^{N_h\times N_h}$ be its Gramian matrix, so that $\|\mathsf{y}\|_{Y}^2=\mathsf{y}^\top\mathsf{D}\mathsf{y}$. Then, the POD-matrix $\mathsf{V}$ can be constructed to be $\|\cdot\|_{Y}$-orthonormal, meaning that $\mathsf{V}^\top\mathsf{D}\mathsf{V}=\mathsf{I}_n$: cf. \Cref{appendix: POD}. 

Alternatively, one may use the $\ell^2$-norm to construct $\mathsf{V}$, as in \eqref{eq: pod loss}, but then re-introduce the norm $\|\cdot\|_{Y}$ at the latent level. Note that for $c_{\mub} \coloneqq \mathsf{V}^\top\mathsf{y}_{\mub}$, by the triangular inequality,
\begin{align*}    
    \|\mathsf{y}_{\mub}-\Phi_\theta(\mub)\|_{Y}&\le
    \|\mathsf{y}_{\mub}-\mathsf{V}c_{\mub}\|_{Y}+\|\mathsf{V}c_{\mub}-\mathsf{V}\phi_{\tilde\theta}(\mub)\|_{Y}=\\&=
    \|\mathsf{y}_{\mub}-\mathsf{V}c_{\mub}\|_{Y}+\|c_{\mub}-\phi_{\tilde\theta}(\mub)\|_{\mathsf{D}^{1/2}\mathsf{V}},
\end{align*}
where $\|\cdot\|_{\mathsf{D}^{1/2}\mathsf{V}}$ is the norm over $\mathbb{R}^{n}$ defined via the relation
\begin{equation*}
    \|a\|_{\mathsf{D}^{1/2}\mathsf{V}}^2=a^\top\left(\mathsf{V}^\top\mathsf{D}\mathsf{V}\right)a.
\end{equation*}
In particular, one can: i) choose $n$ and construct $\mathsf{V}$ via classical POD so that, a posteriori, $\|\mathsf{y}_{\mub}-\mathsf{V}\mathsf{V}^\top\mathsf{y}_{\mub}\|_{Y}$ is below a given tolerance, ii) train $\phi_{\tilde\theta}$ by minimizing a weighted loss in which $\|\cdot\|_{\mathsf{D}^{1/2}\mathsf{V}}$ replaces the Euclidean norm in \eqref{eq: pod-nn loss}. In this work, this is the default implementation of the POD-NN strategy.
\end{remark}

\begin{remark}
    \label{remark:homogeneous}
    It can be shown that, regardless of the underlying norm, the POD-matrix is always expressible as linear combination of the training snapshots. Consequently,
    \begin{equation*}
        \Phi_\textnormal{POD}(\mub)\in\textnormal{span}(\mathsf{V})\subseteq\textnormal{span}\left(\{\mathsf{y}_{\mub_1},\dots,\mathsf{y}_{\mub_N}\}\right)\quad\forall\mub\in M.
    \end{equation*}
    In particular, POD-NN surrogates preserve homogeneous linear constraints. That is, if $D\mathsf{y}_{\mub}\equiv0$ for some linear operator $D$ and all $\mub\in M$, then $D\Phi_{\textnormal{POD}}\equiv0.$ 
    In particular, for any $\mub\in M$,
    \begin{equation*}
    \Phi_\textnormal{POD}(\mub)\in\textnormal{span}\left(\{\mathsf{y}_{\mub_i}\}_{i=1}^{N}\right)\subseteq\ker(D)\implies D\Phi_\textnormal{POD}(\mub)=0.
\end{equation*}
\end{remark}

\section{Learnable solvers that preserve linear and angular momentum} 
\label{sec:conservative solvers}

We now come to our main subject, which is the development of learnable solvers for parametrized PDEs with conservation properties. To this end, let $M\subset\mathbb{R}^{p}$ denote the parameter space. We consider elasticity problems of the form \eqref{eq:model-problem} where the operator $A$, the source term $\bodyforce$ and the boundary term $g_u$ depend on a vector of $p$ parameters, denoted as $\mub\in M$. 
We assume
that for each $\mub\in M$ the problem is well posed.
The FOM described in Section \ref{sub: FOM} defines a map
\begin{equation}
    \label{eq: FOM}
    M\ni\mub\mapsto (\sigma_{\mub},u_{\mub},r_{\mub})\in \Sigma_h\times U_h\times R_h.
\end{equation}
Note that $\Sigma_h\times U_h\times R_h$ is independent of $\mub$ because we adopt the discretization from \Cref{sub: FOM} for all parameter values. 

Learnable solvers aim to approximate the parameter-to-solution map \eqref{eq: FOM} by leveraging a set of pre-computed FOM solutions, called the \textit{training set}, obtained by sampling $N$ parameter configurations $\mub_1,\dots,\mub_N\in M$. Here, we focus on learnable solvers that employ neural networks, thus casting our problem in the general framework of \Cref{sec:dl preliminaries}. 

We focus on developing physically consistent neural network surrogates that ensure conservation of linear and angular momentum,
by leveraging
the spanning tree solver presented in \Cref{sec: spanning tree}. 
Recall that this solver is a linear operator $S_I:U_h'\times R_h'\to\Sigma_h$ that behaves as a right-inverse of the constraint operator $B$, meaning that $BS_If=f$ for all $f\in U'_h\times R_h'$.

Due to its efficiency, the spanning tree solver can readily be integrated within the computational pipeline of reduced order models without degrading their overall performance. This gives us two main advantages. First, as discussed in Section \ref{sec: spanning tree}, we can focus our attention on the stress field, i.e. on the map $\mub\to\sigma_{\mub}$. Using \eqref{eq: postprocessing}, we can leverage $S_I$ to post-process both the displacement $u$ and rotation $r$ from a given surrogate stress $\sigma$. Secondly, the operator $S_I$ allows us to construct subspaces of the kernel of $B$. For that, we define the operator
\begin{align*}
    S_0 \coloneqq I - S_I B.
\end{align*}
Now $S_0 : \Sigma_h \to \ker(B)$ because $BS_0 = B - IB = 0$. 

We can use this operator in two ways: either to split the stress tensor into a homogeneous term with respect to $B$ and a non-homogenoeus remainder, or to construct a corrector operator. The two approaches naturally give rise to two different strategies for reduced order modeling which we detail in \Cref{subsec: split,subsec: corrected}, respectively. An overview is provided in \Cref{tab:approaches}.

\renewcommand{\arraystretch}{1.5}
\begin{table}[ht]
    \centering
    \caption{Overview of the approaches considered for the approximation of the stress variable, complemented with their ansatz and corresponding formula (2nd and 3rd column, respectively).\\}
    \label{tab:approaches}
    \begin{tabular}{l|l|l}
    \hline\hline
    Non-linear regression  & $\stress_{\mub}\approx\tilde{\stress}_{\mub}$               & $=\Phi(\mub)$                                  \\\hline
    Split     & $\stress_{\mub}\approx\tilde{\stress}^0_{\mub}+\stress^{f}_{\mub}$ & $=\Phi_{0}(\mub)+S_{I}\fsource_{\mub}$\\\hline
    Corrected & $\stress_{\mub}\approx\hat{\stress}^{0}_{\mub}+\stress^{f}_{\mub}$ & $=\mathsf{V}_0\mathsf{V}_0^\top\left(\tilde{\stress}_{\mub}-\stress_{\mub}^f\right)+\stress_{\mub}^f$ \\
&                                         & $=\mathsf{V}_0\mathsf{V}_0^\top\left(\Phi(\mub)-S_{I}\fsource_{\mub}\right)+S_{I}\fsource_{\mub}$         \\
\hline\hline
    \end{tabular}
\end{table}

\subsection{The Split approach}
\label{subsec: split}
Following the strategy from \cite{boon2023deep}, we can exploit the operators $S_0$ and $S_I$ to decompose the stress tensor into a \emph{particular} solution, satisfying the conservation constratins, and a \emph{homogeneous} solution, lying in the kernel of $B$. In particular,
\begin{align*}
    \sigma 
    = \underbrace{\left(\sigma - S_I f\right)}_{\in \ker(B)} + S_I f\eqqcolon\sigma^0+\sigma^f.
\end{align*}
The key point here is that, in the parametric setting, the particular solution $\sigma^f=\sigma^f_{\mub}$ can be efficiently computed as
\begin{align*}
    \sigma^{f}_{\mub}
    =  S_I f_{\mub},
\end{align*}
In turn, we construct a neural network surrogate $\Phi_0:M\to\ker(B)$ to approximate the homogeneous component, $\Phi_0(\mub)\approx\sigma^0_{\mub}$, so that the stress approximation becomes:
\begin{align*}
    \sigma^{\text{split}}_{\mub} & \coloneqq \Phi_0(\mub) + S_I f_{\mub}.
\end{align*}
The only caveat is to construct $\Phi_0$ so that $\Phi_0(M)\subseteq\ker(B)$. Fortunately, this can be easily achieved via the POD-NN approach. As discussed in Remark \ref{remark:homogeneous}, networks constructed using the POD-NN strategy are guaranteed to preserve \emph{homogeneous} constraints, if the training set respects the homogeneous constraints.

In practice, we implement the \emph{Split approach} as follows. First, we exploit the FOM to compute a random collection of PDE solutions, $\{(\mub_i,\sigma_{\mub_i})\}_{i=1}^{N}$, which we hereon refer to as training \emph{snapshots}. Then, we leverage the operator $S_0$ to produce a training set of homogeneous solutions, 
\begin{equation*}
    \{(\mub_i, \sigma_{\mub_i}^0)\}_{i=1}^{N}=\{(\mub_i, S_0 \sigma_{\mub_i})\}_{i=1}^{N},
\end{equation*}

We then perform POD over these snapshots to produce a subspace $V_0\subseteq\ker(B)$ of dimension $n_{rb}$. Finally, we introduce a latent neural network $\phi : M \to \mathbb{R}^{n_{rb}}$ and define $\Phi_0$ as
\begin{align*}
    \mathsf{\Phi_0}(\mub) = \mathsf{V_0} \phi_0(\mub).
\end{align*}
Then, up to identifying the vector representation with its corresponding function representation, we have $\Phi_0 : M \to V_0 \subseteq\ker(B)$, as desired. 

\subsection{The Corrected approach}
\label{subsec: corrected}

A second strategy is to leverage the operators $S_0$ and $S_I$ to construct a \emph{corrector}. The idea is based on the following lemma.

\begin{lemma}
    \label{lemma: corrector}
    Let $V_0\subseteq S_0(\Sigma_h)=\ker B$ and let $\pi_0:\Sigma_h\to V_0$ be the orthogonal projection onto $V_0$ in the $\ell^2$ inner product. Define
    \begin{equation*}
        \epsilon \coloneqq \sup_{\mub\in M}\| (I - \pi_0) S_0 \sigma_{\mub} \|,
    \end{equation*}
    where $\sigma_{\mub} \in \Sigma_h$ is the FOM solution from \eqref{eq: FOM}, and $\| \cdot \|$ is the $\ell^2$-norm. Let the correction map $C:\Sigma_h\times M\to\Sigma_h$ be given by
    \begin{equation*}
        C(\tilde\sigma,\mub) \coloneqq \pi_0\left(\tilde\sigma-S_If_{\mub}\right)+S_If_{\mub}.
    \end{equation*}
    Then,
    for all $\tilde{\sigma}\in\Sigma_h$ and all $\mub\in M$ one has
    \begin{itemize}
    \item[i)] $BC(\tilde\sigma,\mub)=f_{\mub}$,
    \item[ii)] $\|\sigma_{\mub}-C(\tilde\sigma,\mub)\|\le\|\sigma_{\mub}-\tilde{\sigma}\|+\epsilon$.
\end{itemize}
\end{lemma}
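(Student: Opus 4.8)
The plan is to verify the two claims directly from the definitions, using only the right-inverse property $BS_I f = f$ and the fact that $\pi_0$ projects onto $V_0 \subseteq \ker B$.

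For claim i), I would apply $B$ to $C(\tilde\sigma,\mub) = \pi_0(\tilde\sigma - S_I f_{\mub}) + S_I f_{\mub}$ and use linearity. The first term $\pi_0(\tilde\sigma - S_I f_{\mub})$ lies in $V_0 \subseteq \ker B$ by hypothesis, so $B$ annihilates it; the second term gives $B S_I f_{\mub} = f_{\mub}$ by the right-inverse property established in \Cref{sec: spanning tree}. Hence $BC(\tilde\sigma,\mub) = f_{\mub}$. This step is essentially a one-line computation.

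For claim ii), the key observation is that $\sigma_{\mub} - S_I f_{\mub} = S_0 \sigma_{\mub} \in \ker B$ (recall $S_0 = I - S_I B$ and $B\sigma_{\mub} = f_{\mub}$, so $S_I B \sigma_{\mub} = S_I f_{\mub}$), so that $\pi_0$ acts on it in a controlled way. I would write
\begin{align*}
    \sigma_{\mub} - C(\tilde\sigma,\mub)
    &= \bigl(\sigma_{\mub} - S_I f_{\mub}\bigr) - \pi_0\bigl(\tilde\sigma - S_I f_{\mub}\bigr) \\
    &= S_0\sigma_{\mub} - \pi_0\bigl(\tilde\sigma - S_I f_{\mub}\bigr).
\end{align*}
Now insert $\pi_0 S_0 \sigma_{\mub}$ by adding and subtracting it, and use that $\pi_0$ is the identity on $V_0$ together with linearity of $\pi_0$:
\begin{align*}
    \sigma_{\mub} - C(\tilde\sigma,\mub)
    &= \bigl(S_0\sigma_{\mub} - \pi_0 S_0\sigma_{\mub}\bigr) + \pi_0\bigl(S_0\sigma_{\mub} - (\tilde\sigma - S_I f_{\mub})\bigr) \\
    &= (I - \pi_0) S_0 \sigma_{\mub} + \pi_0\bigl(S_0\sigma_{\mub} - \tilde\sigma + S_I f_{\mub}\bigr).
\end{align*}
The first summand has norm at most $\epsilon$ by definition of $\epsilon$. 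For the second, note $S_0\sigma_{\mub} - \tilde\sigma + S_I f_{\mub} = (\sigma_{\mub} - S_I f_{\mub}) - \tilde\sigma + S_I f_{\mub} = \sigma_{\mub} - \tilde\sigma$, so the second summand equals $\pi_0(\sigma_{\mub} - \tilde\sigma)$, whose $\ell^2$-norm is at most $\|\sigma_{\mub} - \tilde\sigma\|$ since orthogonal projections are non-expansive. The triangle inequality then gives $\|\sigma_{\mub} - C(\tilde\sigma,\mub)\| \le \epsilon + \|\sigma_{\mub} - \tilde\sigma\|$, which is the claim.

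There is no serious obstacle here; the only point requiring a little care is bookkeeping the identity $S_0\sigma_{\mub} = \sigma_{\mub} - S_I f_{\mub}$ and recognizing that the "extra" term collapses to $\pi_0(\sigma_{\mub}-\tilde\sigma)$ after the cancellations, so that non-expansiveness of the projection can be invoked. One should also state explicitly that $\pi_0$ being an $\ell^2$-orthogonal projection is what licenses both $\pi_0|_{V_0} = \mathrm{id}$ and $\|\pi_0 w\| \le \|w\|$.
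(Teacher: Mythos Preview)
Your proof is correct and follows essentially the same route as the paper: both identify $\sigma_{\mub}-C(\tilde\sigma,\mub)=(I-\pi_0)S_0\sigma_{\mub}+\pi_0(\sigma_{\mub}-\tilde\sigma)$ and then bound the two pieces separately. The only cosmetic difference is that the paper invokes the Pythagorean identity to obtain the slightly sharper intermediate bound $\|\sigma_{\mub}-C(\tilde\sigma,\mub)\|^2\le\epsilon^2+\|\sigma_{\mub}-\tilde\sigma\|^2$ before relaxing via $\sqrt{a+b}\le\sqrt a+\sqrt b$, whereas you go straight to the triangle inequality.
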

\begin{proof}
    i) follows from the fact that $B\pi_0\equiv0$ and thus $BC(\tilde\sigma,\mub)=BS_If_{\mub}=f_{\mub}.$ For ii), we let $\sigma^0_{\mub} \coloneqq S_0 \sigma_{\mub} = \sigma_{\mub} - S_I f_{\mub}$. The orthogonality of $\pi_0$ then allows us to derive
    \begin{align*}
    \|\sigma_{\mub}-C(\tilde\sigma,\mub)\|^2 &= \|\sigma_{\mub}^0+S_If_{\mub}-C(\tilde\sigma,\mub)\|^2 \\&= \|\sigma_{\mub}^0-\pi_0(\tilde\sigma-S_If_{\mub})\|^2 \\&= 
    \|\sigma^{\mub}_0-\pi_0(\sigma_{\mub}^0)\|^2+\|\pi_0(\sigma_{\mub}^0)-\pi_0(\tilde\sigma-S_If_{\mub})\|^2
    \\&\le \epsilon^2 +
    \|\sigma_{\mub}^0-\tilde\sigma+S_If_{\mub}\|^2\\&=\epsilon^2+\|\sigma_{\mub}-\tilde\sigma\|^2. 
\end{align*}
Since $\sqrt{a+b}\le\sqrt{a}+\sqrt{b}$, the conclusion follows.
\end{proof}

\begin{remark}
Lemma \ref{lemma: corrector} can easily be adapted to a different norm, including $\|\cdot\|_{\Sigma_h}$ from \eqref{eqs:norms}.
\end{remark}

The operator $C$ in Lemma \ref{lemma: corrector} acts as a corrector in the sense that, given $\mub\in M$ and any $\tilde\sigma\in\Sigma_h$: it forces $C(\tilde\sigma, \mub)$ to satisfy the linear constraint associated to $\mub$ while simultaneously bringing $\tilde\sigma$ towards $\sigma_{\mub}$: see (i) and (ii), respectively. We emphasize that the subspace $V_0\subseteq\ker B$ needs to be chosen properly, so that $\epsilon$ is sufficiently small. In practice, this can be achieved as in \Cref{subsec: split}, by extracting a POD basis from a set of homogeneous snapshots $\{ S_0 \sigma_{\mub_i}\}_{i=1}^{N}$. 

We now have the ingredients necessary to construct the \emph{Corrected approach}. Given a dataset $\{\mub_i,\sigma_{\mub_i}\}_{i=1}^{N}\subseteq M\times\Sigma_h$, we first train a classical neural network model $\Phi$ such that $\Phi(\mub)\approx\sigma_{\mub}$. Then, we exploit the dataset and the operator $S_0$ to construct a subspace $V_0\subseteq\ker B$ of which we compute the corresponding projection $\pi_0.$ Then, following Lemma \ref{lemma: corrector}, we set
\begin{equation*}
    \sigma^{\text{cor}}_{\mub} \coloneqq \pi_0(\Phi(\mub)-S_If_{\mub})+S_If_{\mub},
\end{equation*}
as our final approximation for the stress field. Equivalently, using matrix-vector notation, $\sigma^{\text{cor}}_{\mub}=\mathsf{V}_0\mathsf{V}_0^\top(\Phi(\mub)-S_If_{\mub})+S_If_{\mub}$: see also \Cref{tab:approaches}. 

\begin{remark} \label{rem: Lipschitz}
    The Corrected approach follows a paradigm that is very close to the one adopted by the Split approach: the difference lies in how they approximate the homogeneous component of the stress field. It is worth mentioning that, from a practical point of view, this difference may prove significant. For instance, it has been observed that neural network models can efficiently approximate functions with small Lipschitz constant \cite{guhring2021approximation}. If we assume the parameter-to-solution map $\mub\to\sigma_{\mub}$ to be $L$-Lipschitz, then we may bound the Lipschitz constant of the homogeneous counterpart, $\mub\to\sigma^0_{\mub}$, as $\varrho_0 L$, where $\varrho_0$ denotes the spectral radius of $S_0$. The Corrected approach and the Split approach then entail learning a map whose Lipschitz constant is $L$ or $\varrho_0 L$, respectively. Consequently, if $\varrho_0>1$, we might expect the Corrected approach to perform better.
\end{remark}

\section{Numerical experiments}\label{sec:experiments}

We devote this section to the empirical assessment of the proposed approaches. To this end, we consider three test cases of increasing complexity: a footing problem in 2D with four parameters (\Cref{subsec:ex1}), a cantilever problem in 3D, featuring three parameters (\Cref{subsec:ex2}), and a non-linear problem in 2D based on the Hencky-von Mises model, where four parameters describe the non-linear constitutive relations of the Lamé coefficients.

In order to verify the advantages of the proposed approaches over standard neural network regression, our analysis will include two benchmark models in which the stress tensor is approximated using a purely data-driven approach, $\sigma_{\mub}\approx\Phi(\mub)$, whereas the displacement $u$ and the rotation $r$ are obtained via the post-processing of \eqref{eq: postprocessing}. Specifically, we consider the following benchmark models:
\begin{itemize}
    \item \textit{Black-box}. In this case, $\Phi$ is a classical feed forward neural network, designed and trained as in \Cref{sub: regression};
    \item \textit{POD-NN}. In this case, the neural network model approximating the stress variable is constructed using the POD-NN strategy. We emphasize that, despite its black-box nature, this approach is consistent with the conservation of \emph{angular} momentum \eqref{eq: ang momentum}. This is because POD-based surrogates are known to preserve homogeneous constraints, as noted in \Cref{remark:homogeneous}. On the other hand, there is no guarantee on the conservation of \emph{linear} momentum.
\end{itemize}

To ensure a fair and systematic comparison, all approaches utilize neural networks with the same architecture, with structural changes only allowed between test cases. Details concerning the architecture are given in \Cref{appendix: architectures}. For each test case, we train the models on $N_{\text{train}} = 150$ randomly generated snapshots and assess their performance using an independent test set of $N_{\text{test}}=50$ instances.
Quantitatively, we measure the quality of the approximation using the $\Sigma_h$-mean relative error ($\Sigma_h\text{-}\MRE$), computed as
\begin{gather}
    \label{eq: l2mre}
    \sigmaMRE = 
    \frac{1}{N_{\text{test}}}\sum_{i=1}^{N_{\text{test}}}\frac{\|\sigma_{\mub_i^{\text{test}}}-\tilde{\sigma}_{\mub_i^{\text{test}}}\|_{\Sigma_h}}{\|\sigma_{\mub_i^{\text{test}}}\|_{\Sigma_h}}.
\end{gather}
We use analogous metrics for the displacement $u$ and the rotation $r$, respectively denoted as $\uMRE$ and $\rMRE$, cf. Eq. \eqref{eqs:norms}. Here, $\{\mub_i^{\text{test}}\}_{i=1}^{N_{\text{test}}}$ are the parameter instances in the test set, whereas $\sigma_{\mub_i^{\text{test}}}$ and $\tilde{\sigma}_{\mub_i^{\text{test}}}$ are the FOM reference and the ROM approximation, respectively.

To evaluate the fulfillment of the conservation laws, we also compute the average constraint violation ($\ACV$), given by
\begin{gather}
    \label{eq: constraint violation}
    \ACV = 
    \frac{1}{N_{\text{test}}}\sum_{i=1}^{N_{\text{test}}}\|B\tilde{\sigma}_{\mub_i^{\text{test}}}-f_{\mub_i^{\text{test}}}\|_{\ell^{\infty}}.
\end{gather}

All test cases were coded in Python 3 using the libraries PorePy \cite{Keilegavlen2020}, PyGeoN \cite{pygeon}, DLROMs \cite{franco2024dlroms}, and a dedicated module available at \url{github.com/compgeo-mox/conservative_ml_elasticity}. All computational grids are made of shape-regular simplices, generated by Gmsh \cite{Geuzaine2009}. 

\subsection{Linearized elasticity in 2D} \label{subsec:ex1}

In this first test case, we consider the classical footing problem with a body force in two dimensions. The domain $\Omega$ is given by the unit square. At the bottom boundary, a no-displacement condition is applied, $u=0$, while homogeneous normal stress $\nu \cdot \sigma = 0$ is imposed on the left and right sides. On the top boundary, we impose a parameter dependent, downward force 
\begin{gather*}
    {g_u}=10^{-3}\cdot[0,-g_y]^\top
    \quad \text{with} \quad 0.5\le f_y\le2.
\end{gather*}
To make the constraint associated to the linear momentum equation 
more challenging, 
we consider a parametric body force equal to 
\begin{gather*}
    {\bodyforce}=10^{-2}\cdot[0,-f_y]^\top
    \quad \text{with} \quad 0.5\le g_y \le2.
\end{gather*}
Finally, we set the parameter dependent Lamé coefficients in the ranges 
\begin{gather*}
    0\le\mu\le2
    \quad \text{and} \quad  0.1\le\lambda\le2.   
\end{gather*}
The parameter space is thus four dimensional, with $\mub=[g_y,f_y,\mu,\lambda].$
For the spatial discretization, we use a computational grid of 242 triangles with mesh size $h=0.1$.

The results are presented in \Cref{tab:case-1} and \Cref{fig:boxplot-case-1,fig:example-case-1}.
Overall, the approaches provide a satisfactory approximation of the stress field when evaluated using the $\Sigma_h$-metric. In particular, \Cref{tab:case-1} shows that all methods report an average $\Sigma_h$ error below $2\%$. However, these inaccuracies propagate to the variables $u$ and $r$, with the rotation field being the most sensitive.

Notably, the approximations provided by the Black-box and POD-NN approaches show a significant disagreement with the conservation laws, with $\ACV$ values 9 orders of magnitude larger than those reported by our proposed approaches. In contrast, both the Split and the Corrected approach manage to fulfill the conservation constraints up to machine precision, as expected by the theory.

\renewcommand{\arraystretch}{1.5} 
\begin{table}[ht]
    \centering
    \caption{Results for the case study in Section \ref{subsec:ex1}. We report the $\MRE$s for stress, $\sigma$, displacement, $u$, and rotation, $r$, for the three ROM approaches. Each MRE is computed according to the natural norm associated to the underlying finite element space, see, e.g., Eq. \eqref{eq: l2mre}. The last column reports the average constraint violation $\ACV$ computed as in Eq. \eqref{eq: constraint violation}.\\}
    
    \label{tab:case-1}
    \begin{tabular}{lcccc}
        \hline\hline        \textbf{ROM}&$\sigmaMRE\;\;[\sigma]$&$\uMRE\;\;[u]$&$\rMRE\;\;[r]$&$\ACV$\\
        \hline
        Black-box   & 9.86e-03    &    1.28e-02   &     1.32e-01   &     3.03e-03 \\
        POD-NN     & 6.39e-03                    & 9.12e-03               & 7.38e-02               & 3.30e-05                      \\
        \rowcolor{Gray}Corrected    & 5.96e-03                    & 9.25e-03               & 7.34e-02               & 1.14e-14                      \\
        Split        & 1.37e-02                    & 7.76e-02               & 7.34e-01               & 1.13e-14                      \\
        \hline\hline
    \end{tabular}
\end{table}

\begin{figure}[ht]
    \centering
    \includesvg[width=0.8\textwidth]{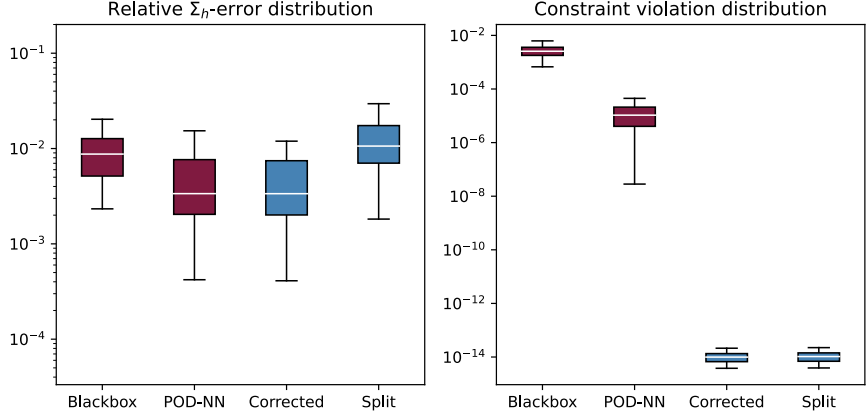}
    \caption{Errors distribution in the stress variable for the case study of \Cref{subsec:ex1}. $\Sigma_h$-relative errors (left) are obtained by the summands in \eqref{eq: l2mre}. Similarly, measurements for the constraint violation (right) refer to \eqref{eq: constraint violation}.}
    \label{fig:boxplot-case-1}
\end{figure}

\begin{figure}[ht]
    \centering
    \includegraphics[width=0.28\linewidth]{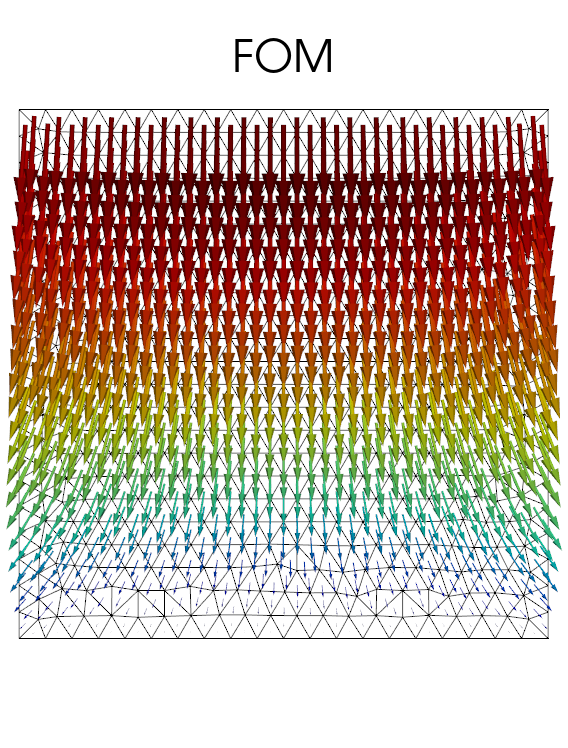}
    \includegraphics[width=0.28\linewidth]{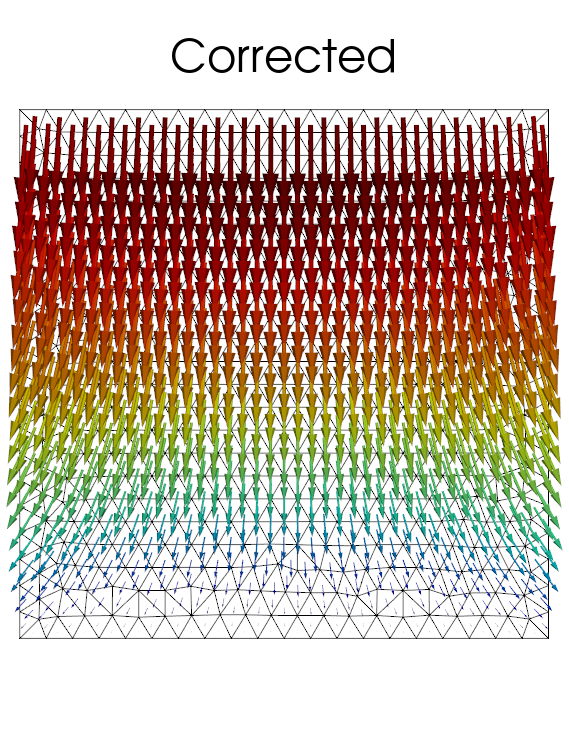}
    \includegraphics[width=0.28\linewidth]{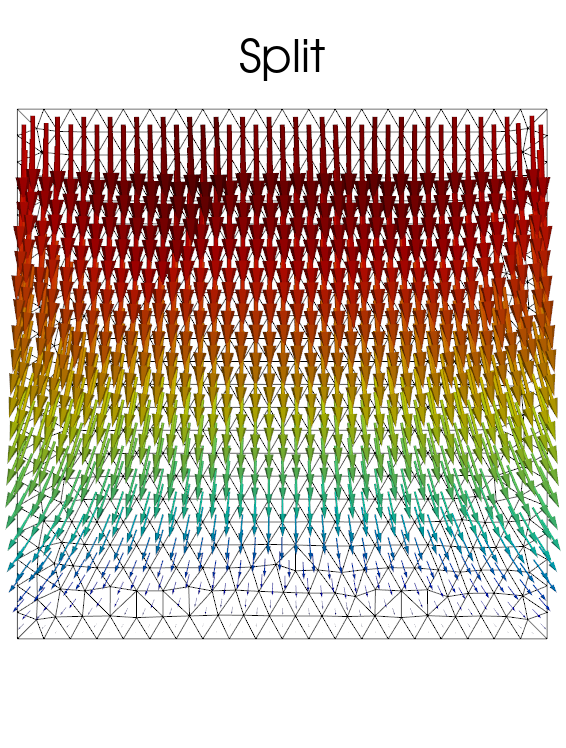}
    \includegraphics[width=0.28\linewidth]{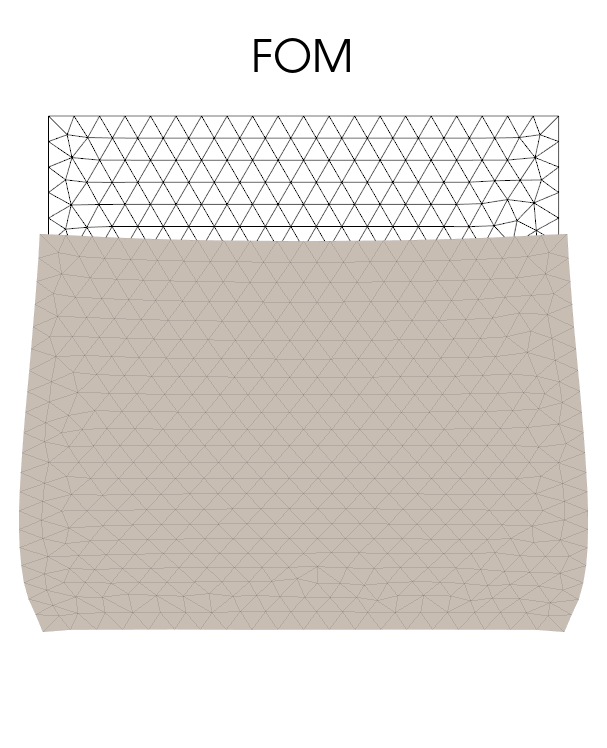}
    \includegraphics[width=0.28\linewidth]{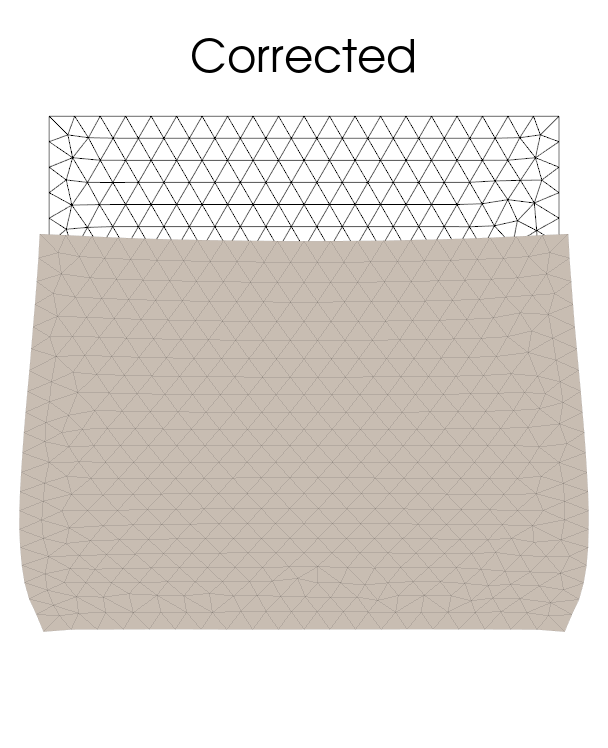}
    \includegraphics[width=0.28\linewidth]{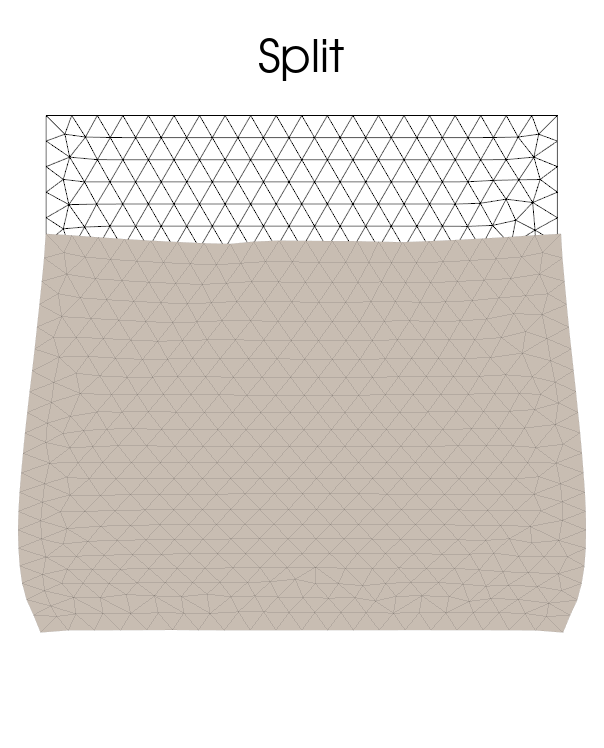}
    \caption{Comparison between FOM and conservative solvers for the footing problem of Section \ref{subsec:ex1}. Parameter values have been selected at random from the test set: $g_y=1.167e-03 $, $f_y=7.800e-03$, $\mu=1.055$, $\lambda=1.593$. Top: quiver plots of the displacement field $u$. Bottom: original domain (grid) and exaggerated, deformed domain according to the computed displacement (grey; exaggeration factor: 150).}
    \label{fig:example-case-1}
\end{figure}

As seen in \Cref{fig:boxplot-case-1}, these consideration not only hold in the average sense, but also from a global perspective.
There, we can also appreciate that the POD-NN and the Corrected approaches produce comparable error distributions, while the Split and Black-box methods show a less favorable performance.
To summarize, the Corrected approach shows the best performance for this case, as it manages to ensure physical consistency while also achieving state-of-the-art performance with respect to $\Sigma_h$ and $L^{2}$ metrics.

The Split approach, instead, guarantees momentum conservation but produces larger errors, which result in a poor approximation of the displacement $u$ and the rotation $r$. A visual inspection of the displacement fields, cf. \Cref{fig:example-case-1}, highlights this phenomenon. While the approximation produced by the Corrected approach is satisfactory, we notice that the Split approach produces some geometrical artifacts that seem related to the structure of the spanning tree solver.

\subsection{Linearized elasticity in 3D}\label{subsec:ex2}

As our second test case, we consider a cantilever fixed at one end and subjected to a body force. The computational domain is 
given by $\Omega = (0, 2)\times (0, 0.5) \times (0, 0.5)$. On the left side, i.e. for $x =0$, we set $u=0$, while on the remaining boundary 
we impose zero traction, $\nu \cdot \sigma = 0$. The body force is parametric and given by
\begin{gather*}
    {\bodyforce}=10^{-2}\cdot[0,0,-f_z]^\top
    \quad \text{with} \quad 0.5\le f_z\le2.    
\end{gather*}
In this case, the Lamé coefficients are parametrized as
\begin{gather*}
    0\le\mu\le2
    \quad \text{and} \quad
    0.1\le\lambda\le2.   
\end{gather*}
Consequently, this case study features three parameters, $\mub=[f_z,\mu,\lambda]$. To construct the FOM we use a mesh of stepsize $h=0.2$, resulting in a computational grid with 412 tetrahedra.

The numerical results are reported in \Cref{tab:case-2} and Figures \ref{fig:boxplot-case-2}-\ref{fig:example-case-2}.
As in the previous test case, we observe that in terms of $\Sigma_h$ and $L^{2}$ accuracy, the Corrected approach performs comparably to both POD-NN and naive Black-box regression, while the Split algorithm produces errors that are an order of magnitude higher than the others. Again, the Corrected and the Split approaches fulfill the physical constraints exactly, by construction. 
In this respect, POD-NN performs slightly better than Black-box regression. Indeed, \Cref{fig:boxplot-case-2} shows that POD-NN yields constraint residuals below $10^{-4}$, whereas the Black-box performs poorly with a residual above $10^{-3}$. This effect is most likely caused by the inherent ability of POD-NN to account for the conservation of angular momentum (cf. \Cref{remark:homogeneous}). The conservation of linear momentum, however, is not guaranteed, resulting in an $\ACV$ of $1.82\cdot10^{-5}$ for POD-NN, as shown in \Cref{tab:case-2}.

\begin{table}[ht]
    \centering%
    \caption{Results for the test case in Section \ref{subsec:ex2}. Table entries read as in \Cref{tab:case-1}.\\}\vspace{0.2cm}
    \label{tab:case-2}
    \begin{tabular}{lcccc}
        \hline\hline        \textbf{ROM}&$\sigmaMRE\;\;[\sigma]$&$\uMRE\;\;[u]$&$\rMRE\;\;[r]$&$\ACV$\\
        \hline
        Black-box  & 6.09e-03    &    3.20e-03   &     3.30e-03    &    4.32e-03\\
        POD-NN     & 5.17e-03                    & 4.05e-03               & 4.05e-03               & 1.82e-05                      \\
        \rowcolor{Gray}Corrected    & 3.52e-03                    & 3.21e-03               & 3.05e-03               & 1.64e-13                      \\
        Split        & 1.49e-02                    & 3.10e-02               & 2.84e-02               & 1.69e-13                      \\
        \hline\hline
    \end{tabular}
\end{table}

\begin{figure}[ht]
    \centering
    \includesvg[width=0.8\textwidth]{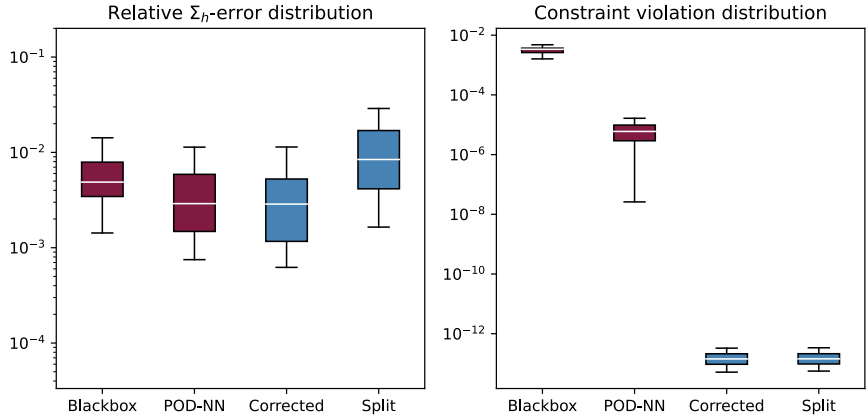}
    \caption{$\Sigma_h$-relative errors in the stress and constraint violation for the second test case, \Cref{subsec:ex2}. Panels read as in \Cref{fig:boxplot-case-1}.}
    \label{fig:boxplot-case-2}
\end{figure}

\begin{figure}[ht]
    \centering
    \includegraphics[width=0.32\linewidth]{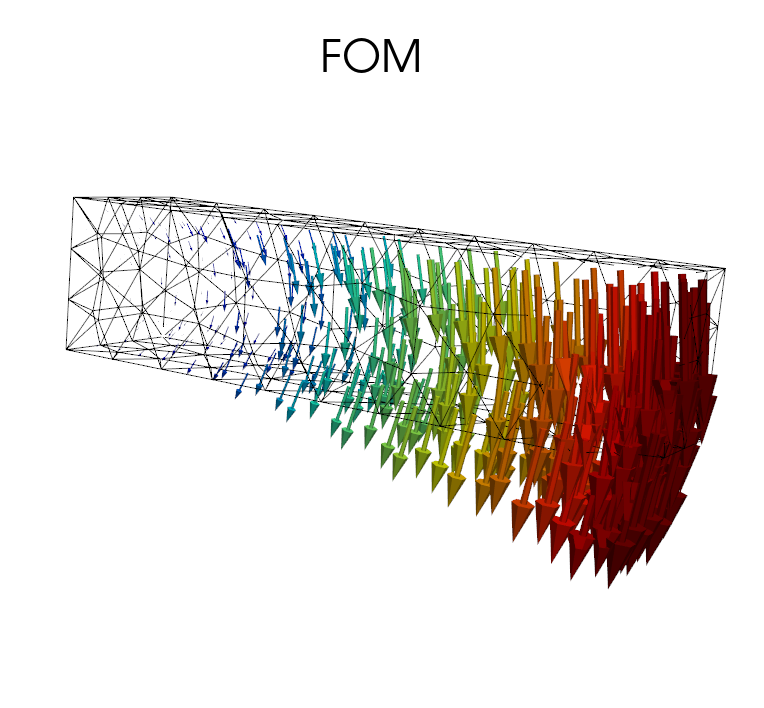}
    \includegraphics[width=0.32\linewidth]{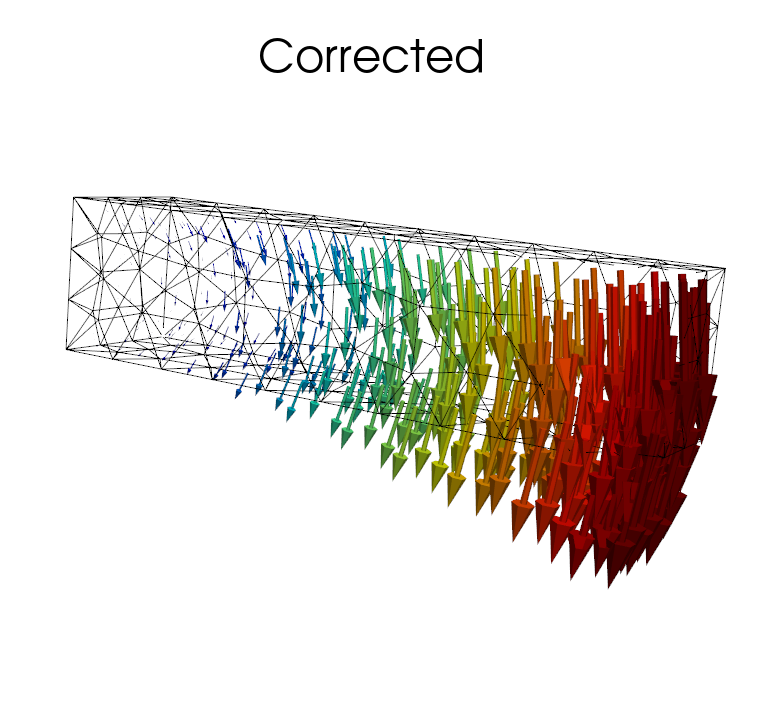}
    \includegraphics[width=0.32\linewidth]{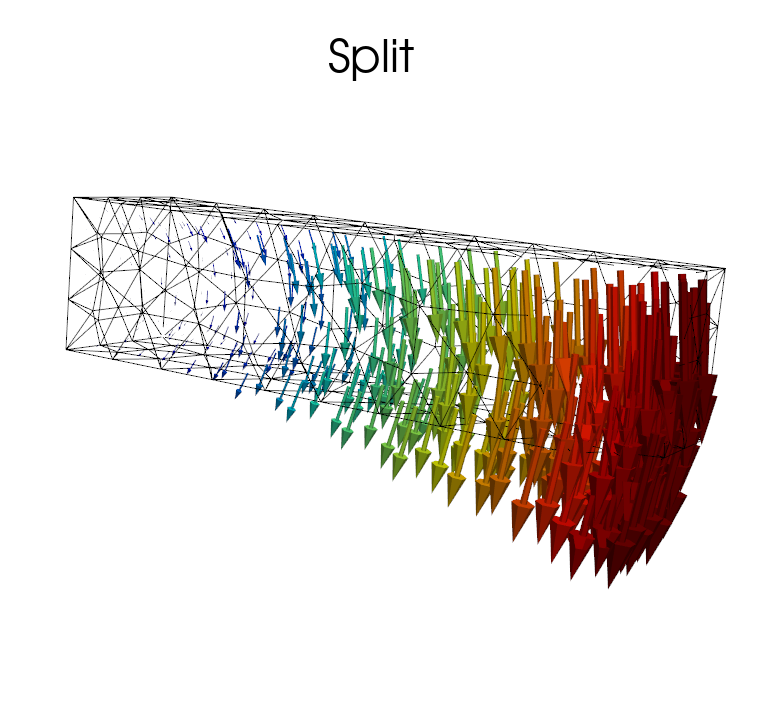}
    \includegraphics[width=0.32\linewidth]{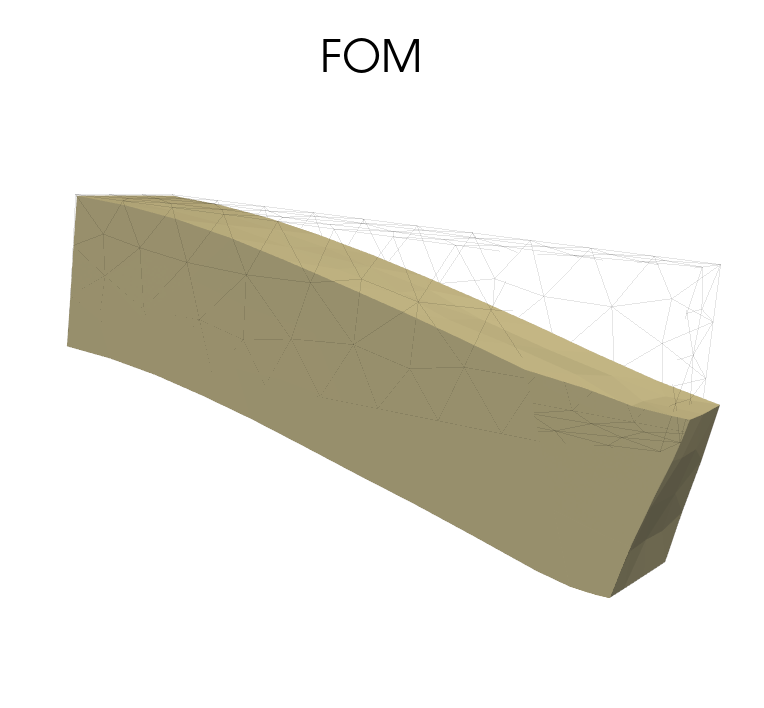}
    \includegraphics[width=0.32\linewidth]{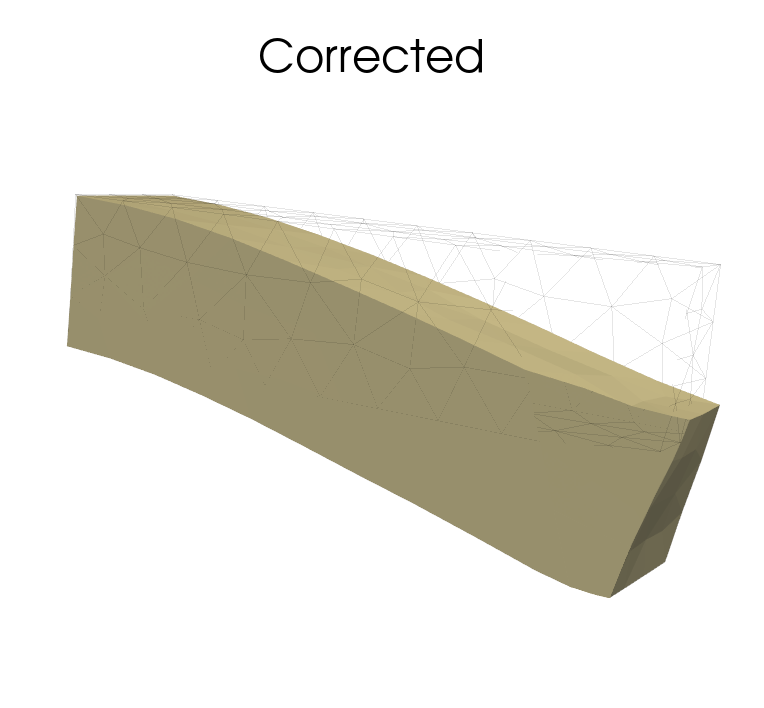}
    \includegraphics[width=0.32\linewidth]{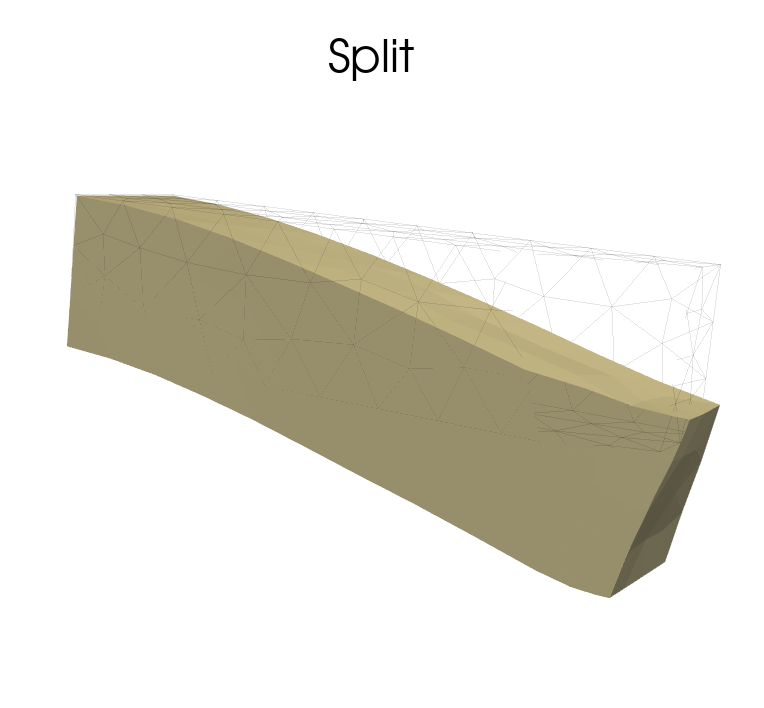}
    \caption{Comparison between FOM and conservative solvers for the cantilever problem of Section \ref{subsec:ex2}. Parameter values have been selected at random from the test set: $\mu= 1.826$, $\lambda= 0.590$, $f_z= 0.0182$. Top: quiver plots of the displacement field $u$. Bottom: original domain (grid) and corresponding 
    deformed domain (grey). The quiver plots are directly generated in PyVista \cite{sullivan2019pyvista} using the displacement as cell data. The generation of warp plots, on the other hand, relies on an interpolation procedure (intrinsic to PyVista) which maps cell data to point data. This may generate visual artifacts when the mesh is coarse, as apparent at the right end of the domain.}
    \label{fig:example-case-2}
\end{figure}

We note that the error propagation during the post-processing of phase is milder in this case. In particular, we notice that all methods manage to produce reasonable approximations of the displacement field. In fact, the FOM and the ROM simulations are nearly indistinguishable in \Cref{fig:example-case-2}.

\subsection{Non-linear elasticity in 2D}\label{subsec:ex3}

In this last test case, we consider a non-linear model of Hencky-von Mises type, cf. \Cref{ex: hencky-von mises}. Let $\Omega=(0,1)^2$ be the unit square, on which we generate a grid of 944 triangles with a mesh size of $h=0.05$. We consider a parametrized variant of the model in \cite{gatica2013priori}, where the Lamé parameters depend on the norm of the deviatoric strain $\zeta = \| \dev \varepsilon u \|$. Specifically, we consider the following expressions for $\mu$ and $\lambda$
\begin{align*}
    \lambda(\zeta) & \coloneqq \alpha \left(1 - \frac12 \mu(\zeta)\right), &
    \mu(\zeta)     & \coloneqq 1 + (1 + \zeta^2)^{\frac{\beta - 2}2},
\end{align*}
for given parameters $\alpha\ge0$ and $\beta\le 2$. Note that $1\le \mu(\zeta)\le 2$ and $0\le\lambda(\zeta)\le\alpha/2$, resulting in a physically relevant range for the Lamé parameters.

We impose the displacement as $g_u$ on the entire boundary and include a body force $\bodyforce$, given by
\begin{align*}
    g_u(x, y)     & = \frac{\gamma}{10}\begin{bmatrix}
        x(1 - x) \\
        y(1 - y)
    \end{bmatrix},        &
    \bodyforce(x, y)       & = \delta
    \begin{bmatrix}
        \left( 4y - 1 \right)
        \left( 4y - 3 \right) \\
        \left( 4x - 1 \right)
        \left( 4x - 3 \right)
    \end{bmatrix},
\end{align*}
with $\gamma$ and $\delta$ model parameters. We allow the four parameters $\mub=[\alpha,\beta,\gamma,\delta]$ to attain values in the following ranges
\begin{align*}
    1  & \le \alpha \le 2, &
    0  & \le \beta \le 2,  &
    -1 & \le \gamma \le 1, &
    -1 & \le \delta \le 1.
\end{align*}

The numerical results are reported in \Cref{tab:case-3} and \Cref{fig:boxplot-case-3,fig:example-case-3}. Again, the Corrected approach outperforms both POD-NN and Black-box. The Split approach exhibits the worst performance in all metrics. Both benchmark models violate the physical constraints significantly, with $\ACV$ values above $10^{-3}$, while the proposed approaches manage to conserve linear and angular momentum up to machine precision. As shown in \Cref{fig:boxplot-case-3}, these observations apply consistently across the entire parameter space.

\begin{table}[ht]
    \centering
    \caption{Results for the test case in Section \ref{subsec:ex3}. Table entries read as in \Cref{tab:case-1}.\\}\vspace{0.2cm}
    \label{tab:case-3}
    \begin{tabular}{lcccc}
        \hline\hline
        \textbf{ROM}&$\sigmaMRE\;\;[\sigma]$&$\uMRE\;\;[u]$&$\rMRE\;\;[r]$&$\ACV$\\
        \hline
        Black-box   & 6.23e-03   &     6.52e-03    &    6.18e-03   &     1.17e-02 \\
        POD-NN     & 7.46e-04                    & 7.87e-04               & 8.86e-04               & 1.16e-03                      \\
        \rowcolor{Gray}Corrected    & 5.91e-04                    & 7.07e-04               & 7.42e-04               & 1.37e-13                      \\
        Split        & 3.83e-03                    & 1.06e-01               & 1.62e-01               & 1.39e-13                      \\
        \hline\hline
    \end{tabular}
\end{table}

\begin{figure}[ht]
    \centering
    \includesvg[width=0.8\textwidth]{boxplot-case3.svg}
    \caption{$\Sigma_h$-relative errors for the stress and constraint violation for the test case of \Cref{subsec:ex3}. Panels read as in \Cref{fig:boxplot-case-1}.}
    \label{fig:boxplot-case-3}
\end{figure}

\begin{figure}[tb]
    \centering
    \includegraphics[width=0.28\linewidth]{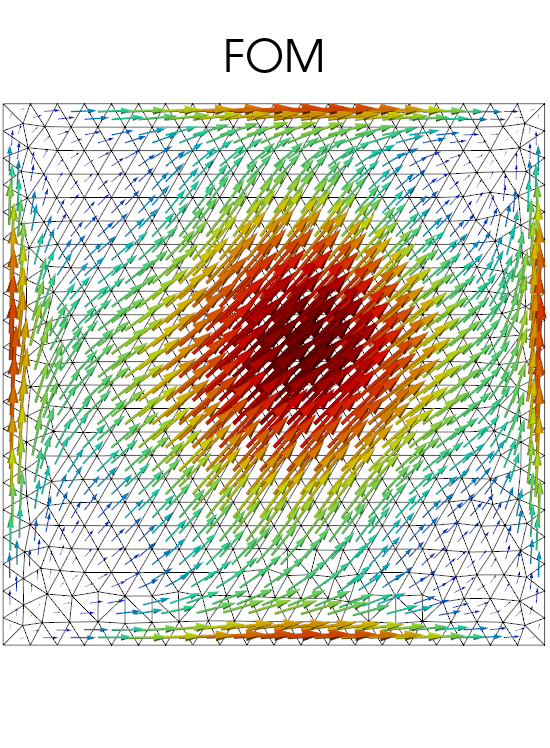}
    \includegraphics[width=0.28\linewidth]{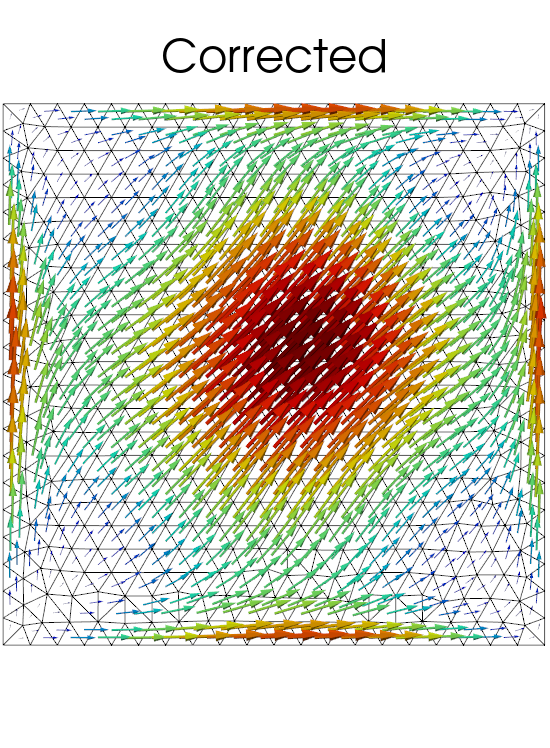}
    \includegraphics[width=0.28\linewidth]{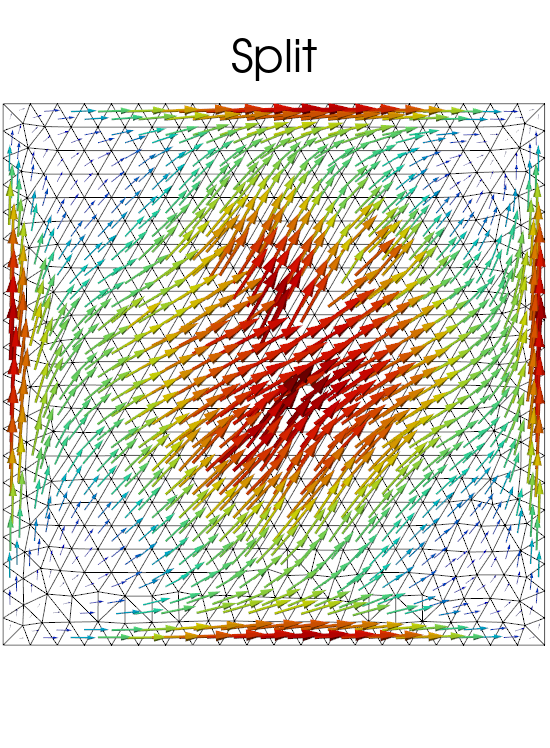}
    \includegraphics[width=0.28\linewidth]{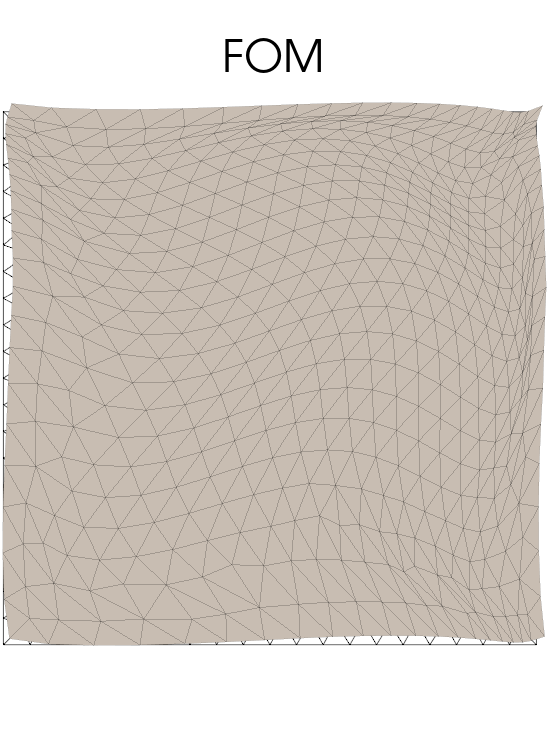}
    \includegraphics[width=0.28\linewidth]{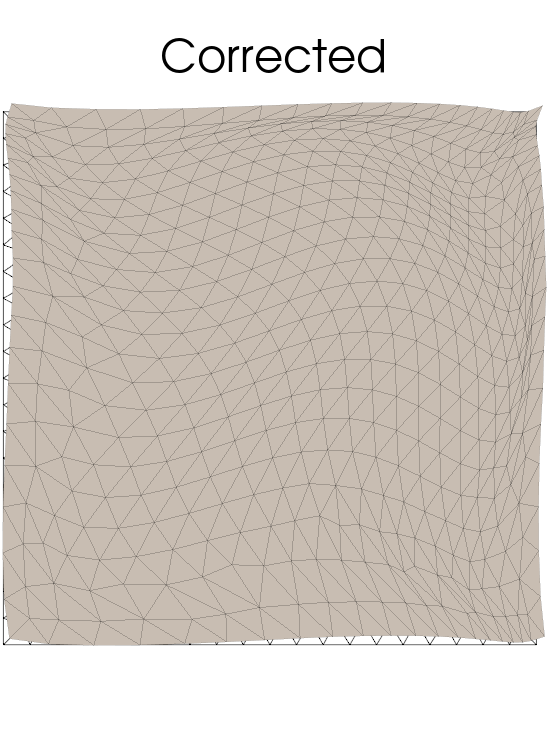}
    \includegraphics[width=0.28\linewidth]{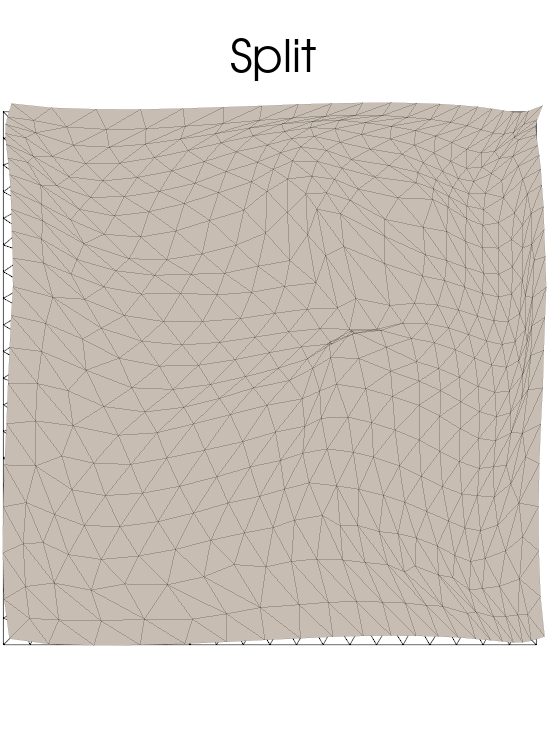}
    \caption{Comparison between FOM and conservative solvers for the Hencky-von Mises model of Section \ref{subsec:ex3}. Parameter values have been selected at random from the test set: $\alpha = 1.311$, $\beta=1.601$, $\gamma = 0.540$, $\delta = -0.646$. Top: quiver plots of the displacement field $u$. Bottom: the deformed domain (exaggeration factor: 20).}
    \label{fig:example-case-3}
\end{figure}

\begin{table}[ht!]
    \centering
    \caption{Comparison of computational times for the Hencky-von Mises problem of Section \ref{subsec:ex3}. Results limited to the best ROM according to \Cref{tab:case-3}. Setup time refers to the time required to initialize the solver (thus not affecting subsequent queries). Computing $\sigma$ is the time required to compute the stress field. Getting $(u,r)$ refers to the time needed to produce a full solution.\\}
    \label{tab:times}
    \begin{tabular}{llll}
        \hline\hline
        & Setup time & Computing $\sigma$ & Computing $(u,r)$ \\
        \hline
        \textbf{FOM}   & 6.95 s &  54.62 s  &  0 s  \\
        \textbf{ROM (Corrected)}   & 7.32 s &  0.0089 s  &  11.46 s  \\
        \hline\hline
    \end{tabular}
\end{table}

Among classical methods, POD-NN is the better alternative, but it fails to provide physical consistency. Our proposed approaches, instead, are capable of guaranteeing momentum conservation, in line with Sections \ref{subsec:ex1}-\ref{subsec:ex2}. However, only the Corrected method succeeds in doing so without introducing numerical artifacts. The Split approach, on the other hand, yields unsatisfactory results in all variables.

This is particularly evident in \Cref{fig:example-case-3}, where we illustrate the deformed domain based on the computed displacements. While the Corrected solution is indistinguishable from the FOM, we observe large deviations for the Split approach, especially in the center of the domain. Once again, this suggests a high sensitivity of the Split approach with respect to the spanning tree, as alluded to in \Cref{rem: Lipschitz}. 
In this case, we used the multi-root spanning tree illustrated in \Cref{fig:spt}(right), which results in multiple independent trees reaching towards the center of $\Omega$. This structure is clearly visible in the approximation proposed by the Split approach, suggesting a strong correlation between the two.

We conclude with a final observation concerning the computational cost\footnote{Clock times were recorded using a personal laptop: MSI Prestige 15 with an 11th-gen 3GHz Intel i7 processor, 16GB of RAM, and an NVIDIA GeForce GTX 1650 Max-Q GPU (4GB). Consequently, the reported values are inherently dependent on the specific hardware and implementation, but they still serve as a useful basis for performance comparison.}. 
Due to the non-linear nature of the problem, each simulation of the FOM requires roughly 1 minute (see \Cref{tab:times}). Approximately $7$ seconds are required for setting up the model, while solving the system takes $\sim 54$ seconds, for each parameter instance $\mub$. The FOM produces the three fields $(\sigma_h,u_h,r_h)$ simultaneously since the triplet is considered as a single unknown in the product space $\Sigma_h\times U_h\times R_h$. 

In contrast, the proposed deep learning based ROMs are significantly faster, as shown in \Cref{tab:times}. Although the ROM requires the same amount of time to be initialized, roughly $7$ seconds, generating solutions for the stress field only takes a few milliseconds. 
In particular, once the FOM and the ROM have been set up, the latter is $\times 6000$ faster in computing the stress field. However, the post-processing of the displacement $u$ and rotation $r$ requires an additional time of around $\sim 11$ seconds. This additional cost is due to the additional assembly of several operators at the FOM level for given $\sigma_h$. Nevertheless, this burden is mitigated by the efficiency of the spanning tree solver.

Thus, the proposed ROM can provide varying levels of speed-up, depending on whether the entire solution is required or if only the stress field is needed. Specifically, for applications where determining the stress field $\sigma$ is of interest, e.g. in evaluating fault stability or assessing structural integrity of the medium, our proposed approach provides an attractive advantage.

\newpage\section{Conclusions} \label{sec:conclusion}

In this work, we have introduced two new reduced order models for elasticity problems that guarantee conservation of linear and angular momentum. The analysis was conducted within the context of data-driven ROMs, where, for problems of this type, methods such as POD-NN constitute the state-of-the-art. Therefore, our effort can be understood as an attempt to bring data-driven surrogates closer to classical physics-based ROMs, such as POD-Galerkin and other Reduced Basis methods.

These two approaches are based on a suitable decomposition of the stress into an homogeneous and a particular parts, where the latter does not depends on the parameters and is the same for both approaches. The actual construction of the homogeneous solution is method specific. The computation of the particular solution is based on a spanning tree strategy to appropriately select suitable degrees of freedom, in order to obtain a (block) triangular system and thus making the computation of the particular solution inexpensive. On the other side, the homogeneous solutions are built based on suitable trained reduced models. 
In the online phase, the resulting approaches are thus effective and computationally efficient in providing approximations of the stress field fulfilling the linear constraints exactly.

We assessed the capabilities of the proposed approaches by comparing their performances against POD-NN (a state-of-the-art approach within the class of data-driven ROMs) and naive neural network regression, which, given our proposal, constitute a natural benchmark. The latter, in fact, can be very accurate when considering crude $L^{2}$ metrics\footnote{Recall that $\|\sigma\|_{L^2(\Omega)}\le\|\sigma\|_{\Sigma_h}$ and $\|u\|_{U_h}=\|u\|_{L^2(\Omega)},$ $\|r\|_{R_h}=\|r\|_{L^2(\Omega)}.$}, but they are not guaranteed to yield physical results.

All our numerical experiments suggest the same conclusion: while black-box methods based on neural networks can achieve small relative errors in the $L^{2}$ sense, their approximations consistently violate the momentum equations, with maximum cell-wise residuals in the order of $10^{-2}-10^{-5}$. Our proposed approaches, instead, are always in exact agreement with the conservation laws, yielding cell-wise residuals close to machine precision. However, of the two proposed strategies (Corrected and Split), only the Corrected approach managed to achieve this physical consistency while simultaneously delivering $L^{2}$-accurate solutions, thus consistently outperforming all benchmark models. The Split method, on the other hand, was not as convincing: its approximations of the stress field were often affected by numerical artifacts and, during the post-processing phase, these inaccuracies tended to propagate over both the displacement $u$ and the rotation $r$.

In conclusion, we presented two possible approaches for the construction of physically consistent neural network solvers compatible, one of which was capable of achieving state-of-the-art performance while ensuring the conservation of both linear and angular momentum. The approach also showed remarkable speed ups with respect to the FOM, especially for what concerns the computation of the stress field. This can be particularly interesting for applications in geomechanics and civil engineering involving, for instance, the evaluation of structural integrity, fault stability or crack propagation.

Future works could be devoted to further improving the computational speed up as to ensure that the efficiency in computing the stress field can directly transfer to the displacement $u$ and the rotation $r$ as well.

\appendix

\section{Principal Orthogonal Decomposition (POD)} \label{appendix: POD}
Let $\{\xi_i\}_{i=1}^{N}\subset H$ be a point cloud in $\mathbb{R}^{N_h}.$ Proper Orthogonal Decomposition is an algorithm that, given a reduced dimension $n\ll \min\{N,N_h\}$, seeks for the matrix $\mathsf{V}_n\in\mathbb{R}^{N_h\times n}$ that minimizes the mean squared projection error, i.e.
\begin{equation}
\label{eq: proj error}
\mathsf{V}_n:=\argmin_{\mathsf{W}\in\mathbb{R}^{N_h\times n}}\;\frac{1}{N}\sum_{i=1}^{N}\|\xi_i-\mathsf{W}\mathsf{W}^\top \xi_i\|^2.\end{equation}
The solution to the latter minimization problem is known in closed form and it can be computed via truncated Singular Value Decomposition (SVD). More precisely, let 
$$\Xi:=[\xi_1,\dots,\xi_N]\in\mathbb{R}^{N_h\times N},$$
be the so-called snapshots matrix. Computing an SVD of the above yields
\begin{equation*}
\Xi=\mathsf{V}\mathbf{\Sigma}\mathsf{U}^\top,
\end{equation*}
for suitable orthonormal matrices $\mathsf{U},\mathsf{V}\in\mathbb{R}^{N\times r}$, and a diagonal matrix $\mathbf{\Sigma}\in\mathbb{R}^{r\times r}$. Without loss of generality, we assume the entries $\mathbf{\Sigma}=\text{diag}(s_1,\dots,s_r)$ to be sorted such that $s_1\ge\dots \ge s_r\ge0$. Here, $r:=\text{rank}(\Xi)$.

Then, the POD algorithm finds $\mathsf{V}_n$ by extracting the submatrix corresponding to the first $n$ columns of $\mathsf{V}$. It can be proven that this procedure actually results in a solution to \eqref{eq: proj error}: see, e.g., \cite[Proposition 6.1]{quarteroni2016reduced}. We mention that, given any positive definite matrix $\mathsf{G}\in\mathbb{R}^{N_h\times N_h}$, one can replace the $\ell^2$-norm in \eqref{eq: proj error} with the weighted norm $\|\cdot\|_{\mathsf{G}}$ induced by the inner-product,
$$\langle \xi, \eta\rangle_{\mathsf{G}}:=\xi^\top\mathsf{G}\eta.$$
In practice, this boils down to computing an SVD of $\mathsf{G}^{1/2}\Xi$ and re-adapting the previous ideas: cf. \cite[Proposition 6.2]{quarteroni2016reduced}.

In both cases, the purpose of the POD is to construct a matrix spanning a subspace of dimension $n$ that approximates the point cloud as accurately as possible. In fact, notice that for every $\xi_i$ in the point cloud, due optimality of orthogonal projections,
$$\|\xi_i-\mathsf{V}_n\mathsf{V}_n^\top\xi_i\|^2=\min_{\eta\in\text{span}(\mathsf{V}_n)}\;\|\xi_i-\eta\|^2.$$

\section{Neural network architectures} \label{appendix: architectures}

In this appendix, we provide additional details on the neural network architectures implemented for the case studies in \Cref{sec:experiments}. In what follows, given two positive integers $p,k\in\mathbb{N}_{+}$, we denote by $F_{p,k}$ the map from $\mathbb{R}^{p}\to\mathbb{R}^{2pk+p}$ acting as
\begin{equation*}
    F_{p,k}:
    \left[\begin{array}{c}
         x_1\\
         x_2\\
         \vdots\\
         x_p 
    \end{array}\right]\mapsto
    \left[\begin{array}{c}
         x_1\\
         \cos(x_1)\\
         \sin(x_1)\\
         \vdots\\
         \cos(kx_1)\\
         \sin(kx_1)
         \end{array}\right]
    \oplus\left[\begin{array}{c}
         x_2\\
         \cos(x_2)\\
         \sin(x_2)\\
         \vdots\\
         \cos(kx_2)\\
         \sin(kx_2)
         \end{array}\right]
    \oplus\dots\oplus
    \left[\begin{array}{c}
         x_p\\
         \cos(x_p)\\
         \sin(x_p)\\
         \vdots\\
         \cos(kx_p)\\
         \sin(kx_p)
         \end{array}\right].
\end{equation*}
where $\oplus$ is the concatenation operator, acting on pair of vectors $\mathbf{y}=[y_1,\dots,y_a]^\top$, $\mathbf{z}=[y_1,\dots,y_b]^\top$ as $\mathbf{y}\oplus\mathbf{z}:=[y_1,\dots,y_a,z_1,\dots,z_b]^\top$.
Essentially, $F_{p,k}$ acts as a feature augmentation map, adding sinusoidal transformations of each $x_i$ up to frequency $k$. These maps are commonly used in the Deep Learning literature as a form of pre-processing and are sometimes referred to as \emph{Fourier layers} (not to be confused with Fourier layers in Neural Operators \cite{kovachki2023neural}, nor with classical dense layers: the ones discussed here, in fact, are \emph{nontrainable}, as their definition does not include learnable parameters).

Conversely, following \Cref{def:layer}, we denote by $D_{a,b}^{\rho}$ a generic dense feed forward layer from $\mathbb{R}^{a}\to\mathbb{R}^{b}$ with 0.1-leakyReLU activation, that is, relying upon
\begin{equation*}
    \rho(x):=\begin{cases}
        x & x\ge0,\\
        0.1x & x<0.
    \end{cases}
\end{equation*}
We also use the notation $D_{a,b}$, without superscript, to intend layers without activation.

With these convention, the architectures employed for the three case studies can be summarized with following scheme, shared by all models:
\begin{equation}\label{label:architecture}
D_{N_h,n}\circ D_{n,30}^{\rho}\circ D_{30,30}^{\rho}\circ D_{30,2pk+p}\circ F_{k,p},    
\end{equation}
where the only problem-dependent quantities are: $p$ the number of PDE parameters, $N_h$ the total number of dof in the stress field discretization, $n$ and $k$. The values of these quantities are summarized in \Cref{tab:numbers}.

More precisely, all models in \Cref{sec:experiments} referred to as "Black-box" implement the architecture in Eq. \eqref{label:architecture}. All the other approaches (POD-NN, Split, Corrected), instead, implement \eqref{label:architecture} using the POD-NN strategy. In particular, the terminal layer $D_{N_{h},n}$ is replaced by the POD matrix $\mathbf{V}\in\mathbb{R}^{N_h\times n}$, acting linearly, and the activation of the second-last layer is removed, formally replacing $D_{n,30}^{\rho}$ with $D_{n,30}.$ Consequently, for these architectures, $n$ can be interpreted as the POD dimension (or, equivalently, the number of POD modes).

\begin{table}[ht]
    \caption{Number of parameters, $p$, number of dof, $N_h=\dim(\Sigma_h)$, POD dimension, $n$, and Fourier frequency, $k$, across the three case studies in \Cref{sec:experiments}.\\}
    \label{tab:numbers}
    \centering
    \begin{tabular}{lllll}
    \hline\hline
     \textbf{Case study}  & $p$ & $N_h$ & $n$ & $k$\\
     \hline
     \S\ref{subsec:ex1} Footing problem     & 4 & 5824 & 10 & 3\\
     \S\ref{subsec:ex2} Cantilever  & 3 & 9018 & 7 & 3\\
     \S\ref{subsec:ex3} Hencky-von Mises & 4 & 5824 & 15 & 2\\\hline\hline
    \end{tabular}
\end{table}

\section*{Acknowledgements}

AF has been partially funded by the PRIN project ``FREYA - Fault REactivation: a hYbrid numerical Approach'' - SLG2RIST01. NF has been supported by project Cal.Hub.Ria (Piano Operativo Salute, traiettoria 4), funded by MSAL. AF and NF are both members of "Gruppo Nazionale per il Calcolo Scientifico" (GNCS).
The present research is part of the activities of ``Dipartimento di Eccellenza 2023-2027'', Italian Minister of University and Research (MUR), grant Dipartimento di Eccellenza 2023-2027.

\bibliographystyle{siam}
\bibliography{references}
\end{document}